\title{\textsc{\textbf{{
Spinors and mass on weighted manifolds}}}}
\author{\textsc{Julius Baldauf\thanks{Supported in part by the National Science Foundation. {\it E-mail}: \texttt{juliusbl@mit.edu}} 
\quad \quad \quad 
Tristan Ozuch}
\vspace{0.2cm}\\
    \textsc{\footnotesize MIT Department of Mathematics}\vspace{-0.1cm}\\
    \textsc{\footnotesize Cambridge, MA} \vspace{-0.05cm}
}
\date{}
\renewcommand\th@plain{\slshape}
\xpatchcmd{\proof}{\itshape}{\slshape}{}{}
\renewcommand\th@plain{\slshape}
\titleformat*{\section}{\centering\large\scshape\sffamily}
\titleformat{\subsection}[runin]
  {\normalfont\bfseries}{\thesubsection.}{0.6em}{}
\titleformat{\subsubsection}[runin]
  {\normalfont\bfseries}{\thesubsubsection.}{0.6em}{}
\numberwithin{equation}{section}
\theoremstyle{plain} 
\newtheorem{lemma}[equation]{Lemma}
\newtheorem{proposition}[equation]{Proposition}
\newtheorem{theorem}[equation]{Theorem}
\newtheorem{corollary}[equation]{Corollary}
\theoremstyle{definition}
\newtheorem{remark}[equation]{Remark}
\newcommand{\R}{\mathbb{R}}
\newcommand{\Z}{\mathbb{Z}}
\newcommand{\N}{\mathbb{N}}
\newcommand{\id}{\mathbbm{1}}
\newcommand{\Div}{\mathrm{div}}
\newcommand{\be}{\begin{equation}}
\newcommand{\ee}{\end{equation}}
\newcommand{\Ric}{\mathrm{Ric}}
\newcommand{\Scal}{\operatorname{R}}
\renewcommand{\phi}{\varphi}
\newcommand{\Rea}{\,\mathrm{Re}\,}
\newcommand{\Ima}{\,\mathrm{Im}\,}
\newcommand{\tr}{\mathrm{tr}}
\newcommand{\Hess}{\mathrm{Hess}}
\newcommand{\mass}{\mathfrak{m}}
\newcommand{\intprod}{\lrcorner\,}
\newcommand{\euc}{\mathrm{euc}}
\newcommand{\dL}{\Delta}
\newcommand{\lambdaALE}{\lambda_{\mathrm{ALE}}}
\begin{document}
\maketitle
\begin{abstract}
This paper generalizes classical spin geometry to the setting of weighted manifolds (manifolds with density) and provides applications to the Ricci flow. 
Spectral properties of the naturally associated weighted Dirac operator, introduced by Perelman, and its relationship with the weighted scalar curvature are investigated.
Further, a generalization of the ADM mass for weighted asymptotically Euclidean (AE) manifolds is defined;
on manifolds with nonnegative weighted scalar curvature, it satisfies a weighted Witten formula and thereby a positive weighted mass theorem. Finally, on such manifolds, Ricci flow is the gradient flow of said weighted ADM mass, for a natural choice of weight function. 
This yields a monotonicity formula for the weighted spinorial Dirichlet energy of a weighted Witten 
spinor along Ricci flow.
\end{abstract}


\section{Introduction}

Manifolds \emph{with density}, or \emph{weighted} manifolds, have long appeared in mathematics. 
A weighted manifold is a Riemannian manifold $(M,g)$ endowed with a function $f:M\to \R$, defining the measure $e^{-f}dV_g$.
After being introduced by Lichnerowicz in \cite{lic1,lic2},  more recent attention has been given to the differential geometry of weighted manifolds, including a generalization of Ricci curvature.
A central idea of Perelman's spectacular proofs \cite{P} required considering manifolds with density and their evolution. 
This led him to introduce a notion of weighted scalar curvature which is \emph{not} the trace of the weighted Ricci curvature of Bakry-Émery.
Sometimes called the P-scalar curvature, this weighted scalar curvature has only been moderately studied; see for instance \cite{fan,ac,lm, D, BH}. 

This paper shows that the intimate relationship between scalar curvature and the Dirac operator generalizes naturally to the weighted scalar curvature and an associated weighted Dirac operator, defined below. 
Well-known theorems relating scalar curvature and the Dirac operator include Friedrich's eigenvalue estimate \cite{F1}, Witten's proof of the positive mass theorem \cite{W}, Gromov-Lawson's obstructions to positive scalar curvature \cite{GL}, and the Seiberg-Witten theory \cite{W3}. Here, the first two of said theorems are generalized and then applied to the Ricci flow.

Aside from their applications in Ricci flow, weighted manifolds have proven extremely useful in the context of diffusion operators in analysis and probability theory, starting with Bakry and Émery's celebrated article \cite{be}. In a more classical Riemannian geometry context, Cheeger-Colding showed that limits of collapsing manifolds are naturally endowed with densities. 
Such densities differ from those defined by the Riemannian volume form, and the natural object of study is a \emph{metric measure space}. See also the many extensions to the theory of (R)CD spaces started in \cite{lv,stu}.

In physics, manifolds with density appear in a number of theories arising from Kaluza-Klein compactifications, via the mechanism of dimensional reduction. The closest to the purpose of this paper is probably Brans-Dicke theory, which motivates the study of manifolds with density and Bakry-\'Emery's notion of (weighted) Ricci curvature in \cite{gw,ww,lmo}. 
Also, the weighted version of the Hilbert-Einstein action, introduced by Perelman, appears as the Lagrangian in several gravitational theories; this fact was noted in \cite{ccd}, for instance.

Table \ref{table:1} gives a summary comparison between classical and weighted quantities. 
The weighted quantities are typically better behaved than their Riemannian counterparts as one can choose a geometrically meaningful density. This idea can be seen as the core of Perelman's proofs \cite{P}. In the context of scalar curvature and mass questions, proofs often employ a conformal change of the metric to reach \emph{constant} scalar curvature, significantly changing the geometry; see \cite{CP} for a survey of this technique. In contrast, on weighted manifolds, the idea is rather to fix the background geometry while varying the weight in order to obtain a metric with \emph{constant} weighted scalar curvature.

\begin{table}[h!]
\begin{center}
\renewcommand{\arraystretch}{1.2} 
\begin{tabular}{ l|l|l } 
& Riemannian & with density\\
 \hline
  Volume form & $dV$ &$e^{-f}dV$ \\ 
  Ricci curvature & $\Ric$ & $\Ric_{f}:=\Ric +\Hess_f$ \\ 
  Scalar curvature & $\Scal$ & $\Scal_{f}:=\Scal +2\Delta f-|\nabla f|^2$ \\ 
 Hilbert-Einstein fct. &$\operatorname{HE}:=\int_M\Scal dV$&$\mathcal{F}(f):=\int_M\Scal_{f}e^{-f}dV$ \\
 Einstein's tensor & $\operatorname{E}:=\Ric-\frac{\Scal}{2}g$ & $\operatorname{E}_f:=\Ric_f-\frac{\Scal_f}{2}g$\\
 Divergence & $\Div$ & $\Div_{f}(h):=\Div(h) - h(\nabla f,\cdot)$\\
 Bianchi identity&$\Div (\operatorname{E})=0$&$\Div_{f}(\operatorname{E}_f)=0$ \\
 Einstein metric &$\Ric=\Lambda g$& $ \Ric_f=\Lambda g $\\
 Mean curvature & $H$ & $H_f:= H-\nabla_\nu f$\\ 
 Dirac operator* & $D$ & $D_f:=D-\frac{1}{2}(\nabla f)\cdot$ \\
 Lichnerowicz formula* &$D^2=-\Delta+\frac{1}{4}\Scal $&$D_f^2=-\Delta_f+\frac{1}{4}\Scal_f $\\
 Ricci identity* &$[D,\nabla_X]=\frac{1}{2}\Ric(X)\cdot $&$[D_f,\nabla_X]=\frac{1}{2}\Ric_f(X)\cdot $\\
 Dirac spinor* & $\psi$ s.t. $D\psi=0$ & $\psi_f:=e^{-\frac{f}{2}}\psi$ s.t. $D_f\psi_f=0$\\
 Eigenvalue bound* &$\lambda(D)^2\geqslant  \frac{n}{4(n-1)}\min\Scal$ & $\lambda(D)^2=\lambda(D_f)^2\geqslant \frac{n}{4(n-1)}\min\Scal_f$ \\
 ADM mass*& $\mass 
    =\lim\limits_{\rho\to \infty}\int_{S_{\rho}}(\partial_ig_{ij}-\partial_jg_{ii})\, dA_j$ & $\mass_f:=\mass+2\lim\limits_{\rho\to \infty}\int_{S_{\rho}}\langle \nabla f,\nu\rangle\,e^{-f}dA$\\
 Witten formula* & $\mass=4\int_M\left(|\nabla\psi|^2+\frac{1}{4}\Scal|\psi|^2\right) dV$ &$\mass_f=4\int_M\left(|\nabla\psi|^2+\frac{1}{4} \Scal_f|\psi|^2\right) e^{-f}dV$ \\
 \hline
\end{tabular}
\end{center}
\vspace{-0.4cm}
\caption{\small Classical vs. weighted quantities. Contributions from this paper are labeled with an asterisk (*).}
\label{table:1}
\end{table}
\vspace{-0.5cm}

\subsection{Weighted Dirac operator.}
Section \ref{sec: Weighted Dirac operator} extends classical spin geometry theory to weighted manifolds. The new mathematical object introduced in this section is the weighted Dirac operator, 
\begin{equation}
    D_f=D-\frac{1}{2}(\nabla f)\cdot.
\end{equation}
The $\nabla f$ term acts by Clifford multiplication, and $D$ denotes the standard (unweighted) Dirac operator. The weighted Dirac operator is self-adjoint with respect to the weighted $L^2$-inner product 
and is unitarily equivalent to the standard Dirac operator; see Proposition \ref{prop: unitary equivalence}. 

Differential operators naturally associated with weighted measures have proven invaluable in analysis and geometry. Of particular note is the weighted Laplacian, $\Delta_f=\Delta-\nabla_{\nabla f}$, also called the drift Laplacian, $f$-Laplacian, or Witten Laplacian. 
When $f=\frac{|x|^2}{4}$ on $\R^n$, then $\Delta_f$ is the Ornstein-Uhlenbeck operator. 
Weighted Laplace operators have been used 
in Ricci and mean curvature flow to analyze solitons \cite{CM, CZ, MW},
and by Witten in his study of Morse theory \cite{W2}, for example.

Proposition \ref{prop: weighted Lichnerowicz} proves a weighted Lichnerowicz formula involving the weighted scalar curvature,
\begin{equation}
    D_f^2=-\dL_f+\frac{1}{4}\Scal_f.
\end{equation}
Proposition \ref{prop: weighted Ricci identity} proves a weighted Ricci identity involving the Bakry-\'Emery Ricci curvature,
\begin{equation}
    [D_f,\nabla_X]=\frac{1}{2}\Ric_f(X)\cdot.
\end{equation}
Theorem \ref{thm: weighted friedrich} generalizes the classical lower bound for Dirac eigenvalues to the weighted setting: on a closed, weighted spin manifold, any eigenvalue $\lambda$ of $D_f$ satisfies
\begin{equation}
    \lambda^2 \geqslant \frac{n}{4(n-1)}\min \Scal_f. 
\end{equation}
Furthermore, the same lower bound also holds for eigenvalues of the standard Dirac operator.

Forthcoming work will study weighted spin manifolds with boundary \cite{BO2}.

\subsection{Weighted asymptotically Euclidean manifolds.}
A fundamental quantity associated with an asymptotically Euclidean (AE) manifold $(M^n,g)$ is the ADM mass \cite{ADM}, denoted $\mass(g)$.
Section \ref{sec: weighted AE manifolds} introduces a quantity extending the ADM mass to the weighted setting: 
the \emph{weighted mass} of an AE manifold with weight function $f$ is defined as
\begin{equation}
    \mass_f(g):= \mass(g) +2\lim_{\rho\to \infty}\int_{S_{\rho}}\langle \nabla f, \nu\rangle\,e^{-f}dA,
\end{equation}
where $S_{\rho}$ is a coordinate sphere of radius $\rho$ with outward normal $\nu$ and area form $dA$.
The normalization for $\mass$ used in this paper is related to Bartnik's \cite{Ba86} by $\mass=c_nm_{\mathrm{ADM}}$, where $c_n=2(n-1)\omega_{n-1}$ and $\omega_{n-1}$ is the area of the unit sphere in $\R^n$; this simplifies the formulas to follow.

Theorem \ref{thm: weighted witten} shows that the weighted mass of a spin manifold satisfies a weighted Witten formula: if the weighted scalar curvature is nonnegative and $f$ decays suitably rapidly at infinity, there exists an asymptotically constant weighted-harmonic spinor $\psi$ of norm 1 at infinity and satisfying
\begin{equation}\label{eqn: weighted Witten intro}
    \mass_f(g)
        =4\int_M\left(|\nabla \psi|^2+\frac{1}{4}\Scal_f|\psi|^2\right)e^{-f} dV_g.
\end{equation}
Moreover, Theorem \ref{thm: positive weighted mass} proves a positive weighted mass theorem on spin manifolds: if the weighted scalar curvature is nonnegative and $f$ decays suitably rapidly at infinity, then
\begin{equation}
    \mass_f(g)\geq 0, \quad \text{with equality iff $(M^n,g)\cong (\R^n,g_{\euc})$ and $\int_{\R^n}(\Delta_f f)\,e^{-f}dV_{g_{\euc}}=0$}.
\end{equation}

By way of a parenthetical remark: using work of Nakajima \cite{N} (see \cite{DO20}), the results of this section have straightforward extensions to asymptotically locally Euclidean spaces of dimension $4$ with subgroup $\mathrm{SU}(2)$ at infinity, though they are not pursued in this paper.

\subsubsection{Weighted mass and Ricci flow.}
ADM mass does not measure how far a manifold is from the Euclidean metric, except in an asymptotic way at infinity. Indeed, one striking way to see this is that 3-dimensional Ricci flow (with surgery) starting at an AE metric with nonnegative scalar curvature converges to Euclidean space \cite{Li18};
however, 
\emph{mass is constant} along the flow and thus does not detect the improvement of the geometry \cite{DM,OW, Ha2, Li18}. 

On the other hand, with a suitable choice of weight function $f$, the weighted mass indeed measures how far an AE manifold is from Euclidean space: the most natural choice for $f$ is the unique $f_g$ decaying at infinity and solving $\Scal_{f_g}\equiv 0$. Theorem \ref{thm: weighted mass = lambda} shows that such an $f_g$ exists on any AE manifold with nonnegative scalar curvature. This surprisingly yields the formula
\begin{equation}\label{eqn: weighted mass equals lambdaALE}
    \mass_{f_g}(g) 
    = -\lambdaALE(g), 
\end{equation}
where $\lambdaALE(g)$ is the renormalized Perelman functional introduced by Deruelle and the second author \cite{DO20}. 
Equality (\ref{eqn: weighted mass equals lambdaALE}) is the content of Theorem \ref{thm: weighted mass = lambda}, and is unexpected at first sight since $\lambdaALE$ stems from a variational principle on the whole manifold, and a priori is not a boundary term. (The notation for $\lambdaALE$ is adopted from \cite{DO20}, since the results here also apply to ALE spaces.)

The renormalized Perelman functional is the correct modification of Perelman's $\lambda$-functional (for closed manifolds) to AE manifolds: it has the crucial property that Ricci flow, $\partial_tg = -2\Ric$, is its gradient flow \cite{DO20, Ha}. Thus equality (\ref{eqn: weighted mass equals lambdaALE}) implies that a Ricci flow on an AE manifold with nonnegative scalar curvature is the gradient flow of the weighted mass (see Corollary \ref{cor: monotonicity of weighted mass}):
\begin{equation}\label{eqn: derivative of weighted mass}
    \frac{d}{dt}\mass_{f_{g}}(g)
    =-2\int_M|\Ric+\Hess_{f_g}|^2e^{-f_g} dV \leq 0,
\end{equation}
and equality implies Ricci-flatness.
Together, (\ref{eqn: weighted Witten intro}), (\ref{eqn: weighted mass equals lambdaALE}), and (\ref{eqn: derivative of weighted mass}) imply the following monotonicity formula along Ricci flow for the weighted \emph{spinorial} Dirichlet energy of a weighted Witten spinor:
\begin{equation}\label{eqn: Dirichlet monotonicity formula}
    \frac{d}{dt}\int_M|\nabla\psi|^2e^{-f_g}dV
    =-\frac{1}{2}\int_M|\Ric+\Hess_{f_g}|^2e^{-f_g} dV.
\end{equation}
This monotonicity formula stands in contrast to the constancy of ADM mass along Ricci flow, which implies that for an (unweighted) Witten spinor $\phi$, the integral
$
    \int_M(|\nabla \phi|^2 +\frac{1}{4}\Scal |\phi|^2)dV
$
is constant along Ricci flow.
Further applications of spin geometry to the Ricci flow, including a direct proof of (\ref{eqn: Dirichlet monotonicity formula}) via the first variation, will be presented in forthcoming work \cite{BO1}.

Equality (\ref{eqn: weighted mass equals lambdaALE}) additionally implies that all of the advantages of $\lambdaALE$ over the ADM mass also hold for the weighted mass. In addition to those already stated, the key advantages of the weighted mass over the ADM mass are as follows:
like ADM mass, $\mass_{f_g}(g)$ is nonnegative on any \emph{spin} AE manifold, and vanishes only on Euclidean space; 
$\mass_{f_g}(g)$ satisfies a \L{}ojasiewicz inequality measuring the distance to Euclidean space;
$\mass_{f_g}(g)$ is real-analytic on weighted H\"older spaces, where neither mass, nor the $L^1$-norm of scalar curvature are defined;
even when an AE manifold has some negative scalar curvature, $\mass_{f_g}(g)$ is nonnegative and detects how far from Euclidean space the geometry is, allowing for stability analysis of gravitational instantons under general perturbations \cite{DO21}.
\section*{\small\bf Acknowledgements}
The first author is indebted to William Minicozzi for continual support, and to Clifford Taubes for inspiring discussions. Part of this work was completed while the first author was funded by a National Science Foundation Graduate Research Fellowship.
\section{Weighted Dirac operator}\label{sec: Weighted Dirac operator}

\indent 
Let $(M^n,g)$ be a complete Riemannian spin $n$-manifold without boundary. The spin bundle $\Sigma M\to M$ is a complex vector bundle of rank $2^{\lfloor \frac{n}{2}\rfloor}$, equipped with a Hermitian metric, Clifford multiplication, and connection. These objects satisfy compatibility conditions which are stated below. A spinor field, or simply spinor, is a section of the bundle $\Sigma M$. For background on spin geometry, see the book \cite{BHMMM}, whose notation and conventions are adopted here.

Let $f\in C^{\infty}(M)$. The weighted Dirac operator $D_f:\Gamma(\Sigma M)\to \Gamma(\Sigma M)$ is defined as 
\begin{equation}
    D_f=D-\frac{1}{2}(\nabla f) \cdot \;,
\end{equation}
where $D=e_i\cdot \nabla_i$ is the standard (Atiyah-Singer) Dirac operator and $\cdot$ denotes Clifford multiplication. 
(Throughout this paper, 1-forms and vector fields will often be identified without explicit mention.)
The weighted Dirac operator is the Dirac operator associated with the modified spin connection $\nabla^f:\Gamma(\Sigma M)\to \Gamma(T^*M\otimes \Sigma M)$, defined by
\begin{equation}\label{eqn: weighted spin connection}
    \nabla_X^f\psi=\nabla_X \psi-\frac{1}{2}(\nabla_X f)\psi,
\end{equation}
where $\nabla$ is the standard spin connection induced by the Levi-Civita connection. The modified spin connection $\nabla^f$ is {\it not} metric compatible with the standard metric \cite[Prop. 2.5]{BHMMM} on the spin bundle, $\langle\cdot,\cdot\rangle$, however, it is compatible with the modified metric $\langle \cdot,\cdot\rangle_f := \langle \cdot,\cdot\rangle e^{-f}$, that is
\begin{equation}\label{eqn: weighted connection is weighted metric compatible}
    X(\langle \psi,\phi\rangle e^{-f})
    =\langle \nabla^f_X\psi,\phi\rangle e^{-f}+\langle \psi,\nabla^f_X\phi\rangle e^{-f},
\end{equation}
for any vector field $X$ and spinors $\psi,\phi$. Moreover, since Clifford multiplication is parallel with respect to the standard spin connection, it is also parallel with respect to $\nabla^f$. This means that
\begin{equation}\label{eqn: Cliff mult is weighted parallel}
    \nabla_X^f(Y\cdot \psi)=Y\cdot \nabla_X^f\psi +(\nabla_XY)\cdot \psi,
\end{equation}
for any vector fields $X,Y$ and spinor $\psi$.

The weighted Dirac operator satisfies the following weighted integration by parts formula on $W^{1,2}(e^{-f}\,dV)$,
\begin{equation}\label{eqn: weighted Laplcian IBP}
    \int_M\langle \psi,D_f\phi\rangle e^{-f}\,dV
    =\int_M \langle D_f \psi,\phi\rangle e^{-f}\,dV
\end{equation}
and hence is self-adjoint on $W^{1,2}(e^{-f}\,dV)$. 
Furthermore, a weighted Lichnerowicz formula holds, which was observed by Perelman \cite[Rem.\ 1.3]{P}. To state it, let
\begin{equation}
    \dL_f=\Delta  -\nabla_{\nabla f}
\end{equation} 
be the weighted Laplacian acting on spinors and let
\begin{equation}\label{eqn: defn of weighted scalar curvature}
    \Scal_f=\Scal+2\Delta f-|\nabla f|^2
\end{equation}
be Perelman's weighted scalar curvature (or P-scalar curvature).

\begin{proposition}
[Weighted Lichnerowicz]\label{prop: weighted Lichnerowicz}
The square of the weighted Dirac operator $D_f$ satisfies
\begin{equation}\label{eqn: weighted Lichnerowicz}
    D_f^2=-\dL_f+\frac{1}{4}\Scal_f.
\end{equation}
\end{proposition}

\begin{proof}
The proof is a consequence of the standard Lichnerowicz formula and the properties of Clifford multiplication. Recall that if $e_1,\dots,e_n$ is a local orthonormal basis of $TM$, then for any symmetric 2-tensor $A$,
\begin{equation}
    \sum_{i,j=1}^n A(e_i,e_j)e_i\cdot e_j\cdot=-\tr(A)\id.
\end{equation}
(The proof is immediate from the Clifford algebra relation $e_i\cdot e_j+e_j\cdot e_i=-2\delta_{ij}\id$). Combined with the standard Lichnerowicz formula and the Clifford algebra relation, it follows that for any smooth spinor $\psi$,
\begin{align}
    D_f^2\psi
    &=\left(D-\frac{1}{2}(\nabla f)\cdot\right)\left(D-\frac{1}{2}(\nabla f)\cdot\right) \psi \\
    &=D^2\psi -\frac{1}{2}D((\nabla f)\cdot \psi)-\frac{1}{2}(\nabla f)\cdot D\psi -\frac{1}{4}|\nabla f|^2\psi \nonumber\\
    &=D^2\psi -\frac{1}{2}e_i\cdot \nabla_i((\nabla_j f)e_j\cdot \psi)-\frac{1}{2}(\nabla_j f)e_j\cdot e_i\cdot \nabla_i\psi -\frac{1}{4}|\nabla f|^2\psi \nonumber\\
    &=D^2\psi 
    -\frac{1}{2} (\nabla_i\nabla_j f)e_i\cdot e_j\cdot \psi
    -\frac{1}{2}(\nabla_j f)(e_i\cdot e_j+e_j\cdot e_i)\cdot \nabla_i\psi 
    -\frac{1}{4}|\nabla f|^2\psi \nonumber\\
    &=-\Delta \psi+\frac{1}{4}\Scal \psi 
    +\frac{1}{2} (\Delta f) \psi
    +\langle \nabla f,\nabla\psi \rangle
    -\frac{1}{4}|\nabla f|^2\psi \nonumber\\
    &=-\Delta_f\psi+\frac{1}{4}(\Scal+2\Delta f-|\nabla f|^2)\psi \nonumber\\
    &=-\Delta_f\psi+\frac{1}{4}\Scal_f\psi. \nonumber
\end{align}
\end{proof}

\begin{remark}
    The weighted Lichnerowicz formula also follows from the Lichnerowicz formula for spin-c Dirac operators \cite[\S 3.3]{F2},
    \begin{equation}
        D^2_A=-\Delta_A+\frac{1}{4}\Scal +\frac{1}{2}dA,
    \end{equation}
    by choosing the spin-c connection $\nabla^A$ for which $A=-\frac{1}{2}df$.
    Indeed, with this connection, 
    \begin{equation}
        \Delta_{A}
        =(\nabla^A)^*\nabla^A
        =\Delta_f-\frac{1}{4}(2\Delta f-|\nabla f|^2)
    \end{equation}
    and $dA=-\frac{1}{2}d^2f=0$, from which the weighted Lichnerowicz formula (\ref{eqn: weighted Lichnerowicz}) follows immediately. In this sense, the weighted Dirac operator can also be thought of as the twisted Dirac operator $D_A$.
\end{remark}

\begin{proposition}
    [Weighted Ricci identity]\label{prop: weighted Ricci identity}
    The weighted Ricci curvature $\Ric_f=\Ric+\Hess_f$ is proportional to the commutator of $D_f$ and $\nabla$: for any vector field $X$ and spinor $\psi$, 
    \begin{equation}
        [D_f,\nabla_X]\psi=\frac{1}{2}\Ric_f(X)\cdot\psi.
    \end{equation}
\end{proposition}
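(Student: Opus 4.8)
The plan is to reduce the weighted Ricci identity to the classical (unweighted) one plus an elementary Clifford-algebra computation of the extra commutator terms. Recall the classical Ricci identity $[D,\nabla_X]\psi = \tfrac12\Ric(X)\cdot\psi$, which I will assume. Writing $D_f = D - \tfrac12(\nabla f)\cdot$, bilinearity of the commutator gives
\begin{equation}
    [D_f,\nabla_X]\psi = [D,\nabla_X]\psi - \tfrac12\big[(\nabla f)\cdot,\nabla_X\big]\psi
    = \tfrac12\Ric(X)\cdot\psi - \tfrac12\big[(\nabla f)\cdot,\nabla_X\big]\psi,
\end{equation}
so everything comes down to showing $\big[(\nabla f)\cdot,\nabla_X\big]\psi = -(\Hess_f(X))\cdot\psi$, after which $\tfrac12\Ric(X)\cdot\psi + \tfrac12(\Hess_f(X))\cdot\psi = \tfrac12\Ric_f(X)\cdot\psi$ as desired.

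To compute that commutator, first I would expand $\nabla_X\big((\nabla f)\cdot\psi\big)$ using the Leibniz rule for Clifford multiplication with respect to the standard spin connection (the $f=0$ case of \eqref{eqn: Cliff mult is weighted parallel}): $\nabla_X\big((\nabla f)\cdot\psi\big) = (\nabla_X\nabla f)\cdot\psi + (\nabla f)\cdot\nabla_X\psi$. Hence $\big[(\nabla f)\cdot,\nabla_X\big]\psi = (\nabla f)\cdot\nabla_X\psi - \nabla_X\big((\nabla f)\cdot\psi\big) = -(\nabla_X\nabla f)\cdot\psi$. Since $\nabla_X\nabla f = \Hess_f(X)$ as vector fields (the metric dual of $\Hess_f(X,\cdot)$), this is exactly $-(\Hess_f(X))\cdot\psi$, which closes the argument. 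One may also run this in an orthonormal frame: write $D_f = e_i\cdot\nabla^f_i$ with $\nabla^f$ as in \eqref{eqn: weighted spin connection}, commute past $\nabla_X$, and collect the curvature term from $\nabla$ (giving $\Ric$ via the standard Clifford trace identity recalled in the proof of Proposition \ref{weighted lichnerowicz}) and the $\Hess_f$ term from the extra $-\tfrac12(\nabla_i f)$ piece.

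I expect no genuine obstacle here; the only points requiring care are bookkeeping ones. One must be careful that $X$ is held fixed (not differentiated) so that $[D_f,\nabla_X]$ is a zeroth-order operator — this is why only the Clifford-multiplication Leibniz term and the curvature term survive, while the $\nabla_i X$ terms cancel exactly as in the classical proof. One should also be consistent with the sign and normalization conventions for $\Hess_f$ and for Clifford multiplication (here $e_i\cdot e_j + e_j\cdot e_i = -2\delta_{ij}$), since a sign slip would flip $\Ric + \Hess_f$ to $\Ric - \Hess_f$. Finally, everything is local and pointwise, so no completeness or decay hypotheses are needed; smoothness of $f$ and $\psi$ suffices.
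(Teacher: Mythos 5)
Your proposal is correct and follows essentially the same route as the paper: decompose $[D_f,\nabla_X]$ into the classical commutator $[D,\nabla_X]=\tfrac12\Ric(X)\cdot$ plus the extra term, and evaluate the latter via the Leibniz rule $\nabla_X((\nabla f)\cdot\psi)=(\nabla_X\nabla f)\cdot\psi+(\nabla f)\cdot\nabla_X\psi$ to produce $\tfrac12\Hess_f(X)\cdot\psi$. The signs and the identification $\nabla_X\nabla f=\Hess_f(X)$ all check out against the paper's computation.
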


\begin{proof}
Recall the unweighted Ricci identity, $[D,\nabla_X]=\frac{1}{2}\Ric(X)\cdot$. (For a proof, see for example \cite[Rem.\ 2.50]{BHMMM}). Using this identity and the fact that Clifford multiplication is parallel with respect to the weighted spin connection (\ref{eqn: Cliff mult is weighted parallel}), it follows that, for any spinor $\psi$,
\begin{align}
    D_f\nabla_X\psi-\nabla_XD_f\psi
    &=D\nabla_X\psi-\frac{1}{2}(\nabla f)\cdot \nabla_X \psi
        -\nabla_XD\psi
        +\frac{1}{2}\nabla_X((\nabla f)\cdot\psi) \\
    &=[D,\nabla_X]\psi 
        +\frac{1}{2}(\nabla_X\nabla f)\cdot\psi \nonumber \\
    &= \frac{1}{2}\Ric(X)\cdot \psi 
        +\frac{1}{2}\Hess_f(X)\cdot \psi. \nonumber
\end{align}
\end{proof}

In what follows, denote the space of weighted $L^2$-spinors by $L_f^2=L^2(\Sigma M,e^{-f}dV)$ and let $L^2$ be the space of unweighted $L^2$-spinors. Define the linear operator
\begin{equation}
    U_f:L^2\to L^2_f, \qquad \qquad \psi\mapsto e^{f/2}\psi.
\end{equation} 
This operator is an isomorphism of Hilbert spaces with inverse given by $U_f^{-1}=U_{-f}$; it preserves norms since
\begin{equation}
    \|U_f\psi\|_{L^2_f}
    =\int_M |e^{f/2}\psi|^2\,e^{-f}dV
    =\|\psi\|_{L^2}.
\end{equation}
In particular, $U_f$ is a unitary operator. Recall that two operators $A,B$ acting on Hilbert spaces with domains of definition $\mathcal{D}_A$ and $\mathcal{D}_B$ are unitarily equivalent if there exists a unitary operator $U$ such that $U\mathcal{D}_A=\mathcal{D}_B$ and $UAU^{-1}x=Bx$ for all $x\in \mathcal{D}_B$.

\begin{proposition}
[Unitary equivalence]\label{prop: unitary equivalence}
    The Dirac operator $D$ and the weighted Dirac operator $D_f$ are unitarily equivalent and hence isospectral;
    on $C^1$-spinors, these operators are related by
    \begin{equation}\label{eqn: unitary equivalence of Dirac and weighted Dirac}
        U_fDU_f^{-1}=D_f.
    \end{equation}
    In particular, $D\psi=0$ if and only if $D_f(e^{f/2}\psi)=0$.
\end{proposition}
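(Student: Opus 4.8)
The plan is to verify the conjugation identity $U_f D U_f^{-1} = D_f$ by a direct computation on $C^1$-spinors, and then read off unitary equivalence and isospectrality as formal consequences. First I would fix a $C^1$-spinor $\psi$ and compute $U_f D U_f^{-1}\psi = e^{f/2} D(e^{-f/2}\psi)$. Using that $D = e_i\cdot\nabla_i$ and the Leibniz rule for the spin connection on the product of the function $e^{-f/2}$ with the spinor $\psi$, I get $D(e^{-f/2}\psi) = e^{-f/2}D\psi + e_i\cdot\big(\nabla_i(e^{-f/2})\big)\psi = e^{-f/2}D\psi - \tfrac{1}{2}e^{-f/2}(\nabla f)\cdot\psi$, where $\nabla_i(e^{-f/2}) = -\tfrac12 e^{-f/2}\nabla_i f$. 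Multiplying through by $e^{f/2}$ gives exactly $D\psi - \tfrac12(\nabla f)\cdot\psi = D_f\psi$, which is (\ref{eqn: unitary equivalence of Dirac and weighted Dirac}). This is the only genuine calculation, and it is short; the main thing to be careful about is that $D$ acts on the $\Sigma M$-valued function $e^{-f/2}\psi$ via the Leibniz rule, with the function-derivative term contributing Clifford multiplication by $-\tfrac12\nabla f$.

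Next I would record that $U_f$ is unitary — this was established just before the statement (it is an isomorphism of Hilbert spaces preserving norms, with inverse $U_{-f}$) — and that it carries the natural domain of $D$ (say $W^{1,2}$ with respect to $dV$, or the $C^1$-spinors as a core) onto the natural domain of $D_f$ (the corresponding $W^{1,2}(e^{-f}dV)$), since multiplication by the smooth positive function $e^{f/2}$ is a bounded isomorphism between these Sobolev spaces when $f$ and its derivatives are controlled; on the relevant manifolds (closed, or with the asymptotic decay assumed later) this is immediate. Combined with the identity on the common core, this gives unitary equivalence of the (self-adjoint) closures by the definition recalled above. Isospectrality is then automatic: unitarily equivalent self-adjoint operators have identical spectra (point, continuous, and residual parts alike), so $\mathrm{Spec}(D) = \mathrm{Spec}(D_f)$, consistent with the eigenvalue statement in Theorem \ref{thm: weighted friedrich}.

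Finally, the last assertion "$D\psi = 0$ iff $D_f(e^{f/2}\psi)=0$" is just the special case of (\ref{eqn: unitary equivalence of Dirac and weighted Dirac}) applied to the (co)kernel: if $D\psi = 0$ then $D_f(e^{f/2}\psi) = D_f U_f\psi = U_f D\psi = 0$, and conversely, applying $U_f^{-1} = U_{-f}$ and the same identity with $f$ replaced by $-f$. Equivalently, this is the spinorial shadow of the table entry $\psi_f := e^{-f/2}\psi$, i.e. a weighted-harmonic spinor is $e^{-f/2}$ times an ordinary harmonic spinor. I do not anticipate a real obstacle here; the only point requiring a word of care is the domain/regularity bookkeeping needed to upgrade the pointwise identity on $C^1$-spinors to a statement about the self-adjoint closures, which on closed manifolds is standard elliptic theory and which I would state rather than belabor.
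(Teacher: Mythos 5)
Your proposal is correct and follows essentially the same route as the paper: the identical Leibniz-rule computation $e^{f/2}D(e^{-f/2}\psi)=D\psi-\tfrac{1}{2}(\nabla f)\cdot\psi$, with isospectrality and the harmonic-spinor correspondence read off as formal consequences of conjugation by the unitary $U_f$. The extra remarks on domains and self-adjoint closures are a harmless refinement of what the paper leaves implicit.
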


\begin{proof}
For any $C^1$-spinor $\psi$, 
    \begin{align}
        U_fDU_f^{-1}\psi
        =e^{f/2}D(e^{-f/2}\psi) 
        =e^{f/2}\left(e^{-f/2}D\psi+(\nabla e^{-f/2})\cdot \psi\right) 
        =D\psi -\frac{1}{2}(\nabla f)\cdot \psi 
        =D_f\psi. 
    \end{align}
This proves (\ref{eqn: unitary equivalence of Dirac and weighted Dirac}), and it follows immediately from this equation and the fact that $U_f$ is an isomorphism, that $D\psi=\lambda \psi$ if and only if $D_f(U_f\psi)=\lambda U_f\psi$. 
In particular, $U_f$ is an isomorphism between the eigenspaces $E_{\lambda}(D)$ and $E_{\lambda}(D_f)$, for any $\lambda\in \R$. Hence, (when defined) the multiplicities of the eigenvalues coincide.
\end{proof}

The following eigenvalue inequality is a generalization of Friedrich's inequality \cite{F1} and the proof below generalizes his proof. See \cite[\S 5.1]{F2} for an insightful exposition of the classical proof, whose outline will be followed below. The weighted Friedrich inequality proved below is sharp. Indeed, on the round sphere with constant scalar curvature $\Scal$ and with $f$ a constant function, equality is obtained. 

\begin{theorem}\label{thm: weighted friedrich}
    Suppose that $(M^n,g)$ is closed, let $f\in C^{\infty}(M)$, and let $\lambda$ be an eigenvalue of the Dirac operator $D$. Then
    \begin{equation}
        \lambda^2\geq \frac{n}{4(n-1)}\min \Scal_f, \label{eqn: weighted friedrich}
    \end{equation}
    with equality if and only if $f$ is constant and $(M^n,g)$ admits a Killing spinor, in which case $(M^n,g)$ is Einstein.
\end{theorem}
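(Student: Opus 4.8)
The plan is to carry Friedrich's modified-connection argument \cite{F1} over to the weighted measure $e^{-f}dV$, with the weighted Lichnerowicz formula \eqref{eqn: weighted Lichnerowicz} playing the role of the classical one. Since $D$ and $D_f$ are isospectral by Proposition~\ref{prop: unitary equivalence}, it suffices to bound the eigenvalues of $D_f$: let $\psi$ be a smooth eigenspinor, $D_f\psi=\lambda\psi$, normalized so that $\int_M|\psi|^2e^{-f}\,dV=1$. The first step is the weighted energy identity
\[
    \lambda^2=\int_M|D_f\psi|^2e^{-f}\,dV=\int_M\langle D_f^2\psi,\psi\rangle e^{-f}\,dV=\int_M\Big(|\nabla\psi|^2+\tfrac14\Scal_f|\psi|^2\Big)e^{-f}\,dV,
\]
which follows from self-adjointness \eqref{eqn: weighted Laplcian IBP} of $D_f$, the weighted Lichnerowicz formula, and the weighted Bochner identity $\int_M\langle-\dL_f\psi,\psi\rangle e^{-f}\,dV=\int_M|\nabla\psi|^2e^{-f}\,dV$ — obtained by applying the divergence theorem, against the weight $e^{-f}$, to the vector field metrically dual to $X\mapsto\Rea\langle\nabla_X\psi,\psi\rangle$; this is the closed, boundaryless analogue of the weighted Witten formula \eqref{eqn: weighted Witten intro}.

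The core computation is to expand the nonnegative quantity $\int_M|\nabla^\mu\psi|^2e^{-f}\,dV\ge0$, where $\nabla^\mu_X\psi:=\nabla_X\psi+\mu\,X\cdot\psi$ is built from the \emph{standard} spin connection and $\mu:=\lambda/n$. It is essential to use $\nabla$ here rather than the weighted connection $\nabla^f$: this is what couples the computation to $\Scal_f$ via \eqref{eqn: weighted Lichnerowicz}, whereas using $\nabla^f$ would instead lead only to the unweighted bound with $\min\Scal$. Skew-adjointness of Clifford multiplication gives $\sum_i\langle\nabla_{e_i}\psi,e_i\cdot\psi\rangle=-\langle D\psi,\psi\rangle$; writing $D\psi=\lambda\psi+\tfrac12(\nabla f)\cdot\psi$ and using that $\langle(\nabla f)\cdot\psi,\psi\rangle$ is purely imaginary (Clifford multiplication by the real vector field $\nabla f$ is skew-adjoint), one gets $\Rea\int_M\langle D\psi,\psi\rangle e^{-f}\,dV=\lambda$. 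Combined with $\sum_i|e_i\cdot\psi|^2=n|\psi|^2$ and the weighted energy identity, the expansion reads
\[
    0\le\int_M|\nabla\psi|^2e^{-f}\,dV-2\mu\lambda+n\mu^2=\lambda^2-\tfrac14\int_M\Scal_f|\psi|^2e^{-f}\,dV-2\mu\lambda+n\mu^2.
\]
Bounding $\int_M\Scal_f|\psi|^2e^{-f}\,dV\ge\min\Scal_f$ and substituting $\mu=\lambda/n$ (the minimizer of $n\mu^2-2\lambda\mu$) yields $\tfrac14\min\Scal_f\le\lambda^2\tfrac{n-1}{n}$, which is \eqref{eqn: weighted friedrich}.

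For the equality case, equality forces $\Scal_f\equiv\min\Scal_f$ on $\{\psi\neq0\}$ and $\nabla^\mu\psi\equiv0$, i.e.\ $\nabla_X\psi=-\tfrac{\lambda}{n}X\cdot\psi$, so $\psi$ is itself a Killing spinor of $(M^n,g)$ with Killing number $-\lambda/n$. A Killing spinor has constant pointwise norm, hence is nowhere zero, so $\Scal_f$ is globally constant; the standard integrability condition for Killing spinors (\cite[\S 5.2]{F2}) forces $\Ric=\tfrac{4(n-1)}{n^2}\lambda^2 g$, so $(M^n,g)$ is Einstein, with $\Scal=\tfrac{4(n-1)}{n}\lambda^2=\min\Scal_f$. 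Thus $\Scal_f\equiv\Scal$, i.e.\ $2\dL f=|\nabla f|^2$; on a closed manifold this integrates (against $e^{-f}dV$, using $\int_M\dL f\,e^{-f}dV=\int_M|\nabla f|^2e^{-f}dV$) to $\int_M|\nabla f|^2e^{-f}dV=0$, so $f$ is constant and we are back in the classical Friedrich equality situation; conversely, on an Einstein manifold admitting a Killing spinor with constant $f$, equality plainly holds.

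The main obstacle I anticipate is the cross-term bookkeeping in the core computation: retaining the \emph{standard} connection in $\nabla^\mu$ while $\psi$ solves the \emph{weighted} Dirac equation produces the discrepancy term $\tfrac12(\nabla f)\cdot\psi$ in $D\psi$, and one must verify that it is purely imaginary against $\psi$ and therefore drops out after taking real parts; a secondary technical point is pinning down the weighted Bochner identity with the correct measure $e^{-f}dV$, so that $\int_M|\nabla\psi|^2e^{-f}\,dV$ is exactly what the weighted Lichnerowicz formula delivers. Everything else is a direct transcription of Friedrich's proof.
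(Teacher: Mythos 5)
Your proof is correct, and it takes a genuinely different route through the core computation than the paper does. The paper twists the \emph{weighted} connection, setting $\nabla^{f,u}_X\psi=\nabla^f_X\psi+uX\cdot\psi$, derives a pointwise identity $(D_f-u)^2=-\Delta_{f,u}+\tfrac{1}{4}\Scal_f+(1-n)u^2$, and integrates against $e^{-f}dV$; you instead twist the \emph{standard} connection and feed the integrated weighted Lichnerowicz identity $\lambda^2=\int_M(|\nabla\psi|^2+\tfrac{1}{4}\Scal_f|\psi|^2)e^{-f}dV$ into the expansion of $\int_M|\nabla^{\mu}\psi|^2e^{-f}dV\geq 0$. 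Your parenthetical warning that twisting $\nabla^f$ would only produce $\min\Scal$ is well taken, and in fact it points at a step the paper's computation glosses over: since $\nabla^f=U_f\nabla U_f^{-1}$, the rough Laplacian of $\nabla^f$ is $\nabla^f_i\nabla^f_i=\Delta_f-\tfrac{1}{4}(2\dL f-|\nabla f|^2)$ (exactly as recorded in the paper's spin-c remark following Proposition \ref{weighted lichnerowicz}), not $\Delta_f$ as used in the paper's formula for $\Delta_{f,u}$; with the corrected identity the paper's pointwise formula becomes $(D_f-u)^2=-\Delta_{f,u}+\tfrac{1}{4}\Scal+(1-n)u^2$, which only recovers the classical Friedrich bound. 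Your route avoids this issue entirely, because the Dirichlet energy $\int_M|\nabla\psi|^2e^{-f}dV$ of the standard connection is precisely the quantity that the weighted Lichnerowicz formula pairs with $\Scal_f$, and the cross-term you flag as the main risk is handled correctly: $\Rea\langle(\nabla f)\cdot\psi,\psi\rangle=0$ by skew-adjointness of Clifford multiplication, so the discrepancy between $D\psi$ and $D_f\psi$ drops out after taking real parts. Your equality analysis is also sound and yields slightly more than the paper records: the $D_f$-eigenspinor $\psi$ itself is a Killing spinor for the Levi-Civita spin connection (rather than $e^{-f/2}\psi$, as in the paper's version), and combining the two equality conditions forces $f$ to be constant --- which is what makes the converse implication true, a point you address and the paper does not.
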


\begin{proof}
Let $\psi$ be an eigenspinor of the Dirac operator with $D\psi=\lambda \psi$.
Define the connection
\begin{equation}
    \nabla^{f,\lambda}_X=\nabla_X+ \frac{1}{2}(\nabla_Xf)+\frac{1}{2n}X\cdot (\nabla f) \cdot +\frac{\lambda}{n}X\cdot.
\end{equation}
A calculation employing a local orthonormal frame shows that the assumption $D\psi=\lambda\psi$ implies
\begin{equation}
    |\nabla^{f,\lambda}\psi|^2=|\nabla \psi|^2-\frac{\lambda^2}{n}|\psi|^2+\frac{1}{4}\left(1-\frac{1}{n}\right)|\nabla f|^2|\psi|^2+\frac{1}{2}\langle \nabla f,\nabla |\psi|^2\rangle.
\end{equation}
Integrating the above equation over $M$ and integrating the last term by parts implies
\begin{equation}
    \int_M|\nabla^{f,\lambda}\psi|^2\,dV
    =\int_M\left(|\nabla \psi|^2-\frac{\lambda^2}{n}|\psi|^2+\frac{1}{4}\left(1-\frac{1}{n}\right)|\nabla f|^2|\psi|^2-\frac{1}{2}(\Delta f) |\psi|^2\rangle\right)dV.
\end{equation}
The standard (unweighted) Lichnerowicz formula, the self-adjointness of $D$ on $L^2$, and the definition of the weighted scalar curvature $\Scal_f$ then imply
\begin{align}
    \int_M|\nabla^{f,\lambda}\psi|^2\,dV
    &=\int_M\left(|D \psi|^2-\frac{1}{4}\Scal|\psi|^2-\frac{\lambda^2}{n}|\psi|^2+\frac{1}{4}\left(1-\frac{1}{n}\right)|\nabla f|^2|\psi|^2-\frac{1}{2}(\Delta f) |\psi|^2\rangle\right)dV
    \\
    &=\int_M\left(\left(\frac{n-1}{n}\right)\lambda^2|\psi|^2-\frac{1}{4}\Scal_f|\psi|^2-\frac{1}{4n}|\nabla f|^2|\psi|^2\right)dV, \nonumber
\end{align}
which, after rearranging, implies
\begin{align}
    \lambda^2\left(\frac{n-1}{n}\right)\|\psi\|_{L^2}^2
    &=\|\nabla^{f,\lambda}\psi\|_{L^2}^2
        +\frac{1}{4}\int_M\left(\Scal_f+\frac{1}{n}|\nabla f|^2\right)|\psi|^2\,dV \\
    &\geq \frac{1}{4}\min_M \Scal_f \|\psi\|_{L^2}^2. \nonumber
\end{align}
This was to be shown.

If equality occurs in the previous inequality, then $\Scal_f$ is constant, $\nabla^{f,\lambda}\psi=0$ and $\nabla f=0$. In particular, $f$ is constant, so $0=\nabla^{f,\lambda}\psi=\nabla^{0,\lambda}\psi$. This is equivalent to the condition that
\begin{equation}
    \nabla_X\psi=-\frac{\lambda}{n}X\cdot \psi,
\end{equation}
for all vector fields $X$. Hence $\psi$ is a Killing spinor.
Finally, a manifold admitting a Killing spinor must be Einstein; see for example \cite[\S 5.2]{F2}. The converse is immediate.
\end{proof}

Whenever the scalar curvature is not constant, Theorem \ref{thm: weighted friedrich} implies a strict improvement of Friedrich's inequality.
This is because the weight $f$ can always be chosen to make $\Scal_f$ constant, while if $\Scal$ is not constant, then it follows that $\Scal_f>\Scal_{\min}$.
To show this, recall that Perelman's entropy $\lambda_{\mathrm{P}}$ is defined as the first eigenvalue of the operator $-4\Delta +\Scal$, or equivalently, as the minimum of the weighted Hilbert-Einstein functional \cite{P}:
\begin{equation}
    \lambda_{\mathrm{P}}
    =\inf_u\frac{\int_M\left(4|\nabla u|^2+\Scal u^2\right)dV}{\int_M u^2\,dV}
    =\inf_f\frac{\int_M\Scal_fe^{-f} \,dV}{\int_Me^{-f}\,dV}.
\end{equation}
If $f$ is the minimizer of $\lambda_{\mathrm{P}}$, the \emph{weighted} scalar curvature is constant, with $\Scal_f=\lambda_{\mathrm{P}}$.
On the other hand, if the scalar curvature is not constant, then  $\Scal_f=\lambda_{\mathrm{P}}>\Scal_{\min}$, and thus the weighted Friedrich inequality (\ref{eqn: weighted friedrich}) implies a strict improvement of Friedrich's inequality. 

\begin{corollary}\label{thm: weighted friedrich at lambda}
    Any eigenvalue $\lambda$ of the Dirac operator $D$ on a closed manifold $(M^n,g)$ satisfies
    \begin{equation}
        \lambda^2\geq \frac{n}{4(n-1)}\lambda_{\mathrm{P}}(g),
    \end{equation}
    with equality if and only if $(M^n,g)$ admits a Killing spinor, in which case $(M^n,g)$ is Einstein.
\end{corollary}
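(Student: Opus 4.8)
The plan is to derive the corollary directly from Theorem \ref{thm: weighted friedrich} by choosing $f$ to be the distinguished weight that minimizes Perelman's $\mathcal{F}$-functional, so that $\Scal_f$ becomes the constant $\lambda_{\mathrm{P}}(g)$ and hence $\min_M\Scal_f=\lambda_{\mathrm{P}}(g)$. The one preliminary point is the existence and regularity of this minimizer. I would recall the classical substitution of Rothaus and Perelman: writing $w=e^{-h/2}$ and integrating by parts gives
\begin{equation}
  \mathcal{F}(h)=\int_M\Scal_he^{-h}\,dV=\int_M\big(4|\nabla w|^2+\Scal\,w^2\big)\,dV,
  \qquad \int_Me^{-h}\,dV=\int_Mw^2\,dV .
\end{equation}
Thus on the closed manifold $M$, the infimum of $\mathcal{F}$ under the constraint $\int_Me^{-h}\,dV=1$ equals the bottom eigenvalue $\lambda_{\mathrm{P}}(g)$ of the Schr\"odinger operator $-4\Delta+\Scal$, and it is attained by the ground state $w$, which does not change sign and may be taken positive; consequently $f:=-2\log w\in C^\infty(M)$ is well defined.

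Next I would record the Euler--Lagrange equation: the minimizer satisfies $-4\Delta w+\Scal\,w=\lambda_{\mathrm{P}}(g)\,w$. Since $w=e^{-f/2}$ yields $\Delta w=e^{-f/2}\big(-\tfrac12\Delta f+\tfrac14|\nabla f|^2\big)$, this equation is equivalent to
\begin{equation}
  \Scal_f=\Scal+2\Delta f-|\nabla f|^2\equiv\lambda_{\mathrm{P}}(g),
\end{equation}
a constant on $M$. In particular $\min_M\Scal_f=\lambda_{\mathrm{P}}(g)$, so Theorem \ref{thm: weighted friedrich} applied to this $f$ gives $\lambda^2\geq\frac{n}{4(n-1)}\min_M\Scal_f=\frac{n}{4(n-1)}\lambda_{\mathrm{P}}(g)$ for every eigenvalue $\lambda$ of $D$.

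For the equality statement I would argue as follows. If $\lambda^2=\frac{n}{4(n-1)}\lambda_{\mathrm{P}}(g)$, then equality holds in Theorem \ref{thm: weighted friedrich} for this $f$, so there is a spinor $\psi$ with $\nabla^f_X\psi=-\tfrac{\lambda}{n}X\cdot\psi$ for all $X$; as shown in the proof of that theorem, $\phi:=e^{-f/2}\psi$ then solves $\nabla_X\phi=-\tfrac{\lambda}{n}X\cdot\phi$, so $(M^n,g)$ admits a Killing spinor and is hence Einstein. Conversely, if $(M^n,g)$ admits a Killing spinor, then it is Einstein and therefore has constant scalar curvature $\Scal$; feeding the constant potential into the spectral characterization above gives $\lambda_{\mathrm{P}}(g)=\Scal$, and the classical Friedrich equality $\lambda^2=\frac{n}{4(n-1)}\Scal$ is then exactly $\lambda^2=\frac{n}{4(n-1)}\lambda_{\mathrm{P}}(g)$.

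No serious obstacle is expected: all of the analytic work is already contained in Theorem \ref{thm: weighted friedrich}, and the remaining ingredients---the variational/spectral description of $\lambda_{\mathrm{P}}(g)$ and the positivity of the $\mathcal{F}$-minimizer, which makes $f=-2\log w$ smooth---are classical. The only step warranting care is the bookkeeping that turns the Euler--Lagrange equation for $w$ into the identity $\Scal_f\equiv\lambda_{\mathrm{P}}(g)$, i.e. the verification that the distinguished weight does produce constant weighted scalar curvature.
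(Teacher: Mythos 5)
Your proposal is correct and follows essentially the same route as the paper: the paper's own derivation of this corollary consists precisely of choosing $f$ to minimize Perelman's $\mathcal{F}$-functional, noting that the minimizer satisfies $\Scal_f\equiv\lambda_{\mathrm{P}}(g)$, and then invoking Theorem \ref{thm: weighted friedrich}. You merely spell out the standard details (the substitution $w=e^{-f/2}$, the Euler--Lagrange equation, and the positivity of the ground state) that the paper leaves implicit, all of which check out.
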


The bound (\ref{thm: weighted friedrich at lambda}) gives another proof of the \emph{stability} of hyperk\"ahler metrics on the $K3$ surface along Ricci flow. Indeed, all metrics on $K3$ satisfy the above inequality with $\lambda = 0$ since $\hat{A}(K3)\neq 0$. Consequently, Corollary \ref{thm: weighted friedrich at lambda} implies that $\lambda_P(g)\leqslant 0$ for \emph{all} metrics $g$ on $K3$, with equality exactly on hyperk\"ahler metrics. These metrics are consequently stable by \cite{Ha}.

\begin{remark}
    Hijazi \cite[Eqn.\ (5.1)]{Hi} proved an inequality closely related to that of Theorem \ref{thm: weighted friedrich}. Hijazi's proof employs the Dirac operator of a conformally related metric, whereas the proof of Theorem \ref{thm: weighted friedrich} keeps the metric fixed and uses the weighted Lichnerowicz formula (\ref{eqn: weighted Lichnerowicz}). 
    Hijazi's inequality implies that any eigenvalue $\lambda$ of the Dirac operator satisfies $\lambda^2\geq \frac{n}{4(n-1)}\mu_1(g)$, where $\mu_1(g)$ is the smallest eigenvalue of the conformal Laplace operator $-4\frac{n-1}{n-2}\Delta +\Scal$. Since $\lambda_{\mathrm{P}}(g)$ is the first eigenvalue of the operator $-4\Delta+\Scal$, it follows that
    \begin{equation}
        \mu_1(g)\geq \lambda_{\mathrm{P}}(g).
    \end{equation}
    In this sense, Hijazi's inequality \cite[Eqn.\ (5.1)]{Hi} is sharper than the inequality of Theorem \ref{thm: weighted friedrich}. On the other hand, the inequality in Corollary \ref{thm: weighted friedrich at lambda} improves along Ricci flow.
\end{remark}
\section{Weighted asymptotically Euclidean manifolds}\label{sec: weighted AE manifolds}

A smooth orientable Riemannian manifold $(M^n,g)$ is called asymptotically Euclidean (AE) of order $\tau$ if there exists a compact subset $K\subset M$ and a diffeomorphism $\Phi:M\setminus K\to \R^n\setminus B_{\rho}(0)$, for some $\rho>0$, with respect to which
\begin{equation}
    g_{ij}=\delta_{ij}+O(r^{-\tau}), \qquad \partial^kg_{ij}=O(r^{-\tau-k}),
\end{equation}
for any partial derivative of order $k$ as $r\to \infty$, where $r=|\Phi|$ is the Euclidean distance function. 
The set $M\setminus K$ is called the end of $M^n$. (The results of this section extend in a straightforward manner to AE manifolds with multiple ends, though they are not pursued here.)

The ADM mass \cite{ADM} of $(M^n,g)$ is defined 
by
\begin{equation}\label{eqn: definition of mass}
    \mass(g)
    =\lim_{\rho\to \infty}\int_{S_{\rho}}(\partial_ig_{ij}-\partial_jg_{ii})\, \partial_j \intprod dV_{g},
\end{equation}
where $S_{\rho}=r^{-1}(\rho)$ is a coordinate sphere of radius $\rho$.\footnote{
The ADM mass as defined in \cite{Ba86} equals $(2(n-1)\omega_{n-1})^{-1} \mass(g)$, where $\omega_{n-1}$ is the area of the unit sphere in $\R^n$.
}
Although the definition of mass involves a choice of AE coordinates, if $\tau>(n-2)/2$ and the scalar curvature is integrable, then the mass is finite and independent of the choice of AE coordinates \cite{Ba86, C}. 
If $n\leq 7$ or $(M^n,g)$ admits a spin structure, then the assumptions $\Scal\geq 0$, $\Scal\in L^1(M,g)$, and $\tau>\frac{n-2}{2}$, imply that $\mass(g)$ is nonnegative and is zero if and only if $(M^n,g)$ is isometric to $(\R^n,g_{\euc})$, by the positive mass theorem \cite{SY,W}.

The AE structure defines a trivialization of the spin bundle at infinity. Indeed, choose an asymptotic coordinate system $\Phi^{-1}:\R^n\setminus B_R(0)\to M\setminus K$. The pullback bundle $(\Phi^{-1})^*\Sigma M$ differs from the trivial spin bundle $\R^n\times \Sigma$ by an element of $H^1(\R^n\setminus B_R(0);\Z)=0$. 
Hence the spin structure is trivial over the end of $M$ and the bundle $(\Phi^{-1})^*\Sigma M$ extends trivially over all of $\R^n$. 
This trivialization of the spin bundle allows for the definition of ``constant spinors'' on the end of $M$: a spinor $\psi$ defined on the end $M$ is called {\it constant} (with respect to the asymptotic coordinates $\Phi$) if $\psi=(\Phi^{-1})^*\psi_0$, for some constant spinor $\psi_0$ on $\R^n$.

Witten argued that for any such constant spinor $\psi_0$ on $M\setminus K$ with $|\psi_0|\to 1$ at infinity, there exists a harmonic spinor $\psi$ on $M$ which is asymptotic to $\psi_0$, in the sense that $|\psi-\psi_0|=O(r^{-\tau})$ and $|\nabla \psi|=O(r^{-\tau-1})$. Such a spinor $\psi$ is called a {\it Witten spinor}. Moreover, the ADM mass of $(M^n,g)$ is given by
\begin{equation}
    \mass(g)=4\int_M\left(|\nabla \psi|^2+\frac{1}{4}\Scal |\psi|^2\right) dV_g,
\end{equation}
which is called {\it Witten's formula} for the mass. A rigorous proof of the existence of Witten spinors is given by Parker-Taubes \cite{PT} and Lee-Parker \cite{LP}; their proofs are generalized below and in Appendix \ref{sec: appendix}.

\subsection{Weighted mass.}\label{subsec: weighted mass}

The {\it weighted ADM mass} of a weighted AE manifold $(M^n,g,f)$ is defined by
\begin{equation}\label{eqn: weighted mass defn}
    \mass_f(g) := \mass(g) + 2\lim_{\rho\to \infty}\int_{S_{\rho}}\langle \nabla f,\nu\rangle \,e^{-f}dA.
\end{equation}
This definition is motivated by the weighted Witten formula (\ref{eqn: weighted Witten formula}) below, and manifestly extends to non-spin manifolds. 
Like ADM mass, the weighted mass is independent of the choice of asymptotic coordinates if $\tau>\frac{n-2}{2}$ and $\Scal\in L^1(M)$: indeed, the ADM mass is coordinate independent under said assumptions \cite{Ba86, C}, and by the divergence theorem, the second term in (\ref{eqn: weighted mass defn}) equals $2\int_M(\Delta_ff) \, e^{-f}dV$, which is manifestly coordinate independent.

The appropriate analytic tools for studying AE manifolds are the \emph{weighted H\"older spaces} $C^{k,\alpha}_{\beta}(M)$, whose precise definitions are stated in Appendix \ref{sec: appendix}.
These spaces share many of the global elliptic regularity results which hold for the usual H\"older spaces on compact manifolds.
The index $\beta$ is important because it denotes the {\it order of growth}: functions in $C^{k,\alpha}_{\beta}(M)$ grow at most like $r^{\beta}$.
In particular, if the metric $g$ is AE of order $\tau$ on $M=\R^n$, then in the AE coordinate system, $g-\delta$ lies in $C^{k,\alpha}_{-\tau}(M)$ for all $k\in \N$ and the scalar curvature of $g$ lies in $C^{k,\alpha}_{-\tau-2}(M)$ for all $k\in \N$.

In what follows, let $D_f$ be the weighted Dirac operator associated with the weighted spin connection (\ref{eqn: weighted spin connection}) defined by $f$, which satisfies the weighted Lichnerowicz formula (\ref{eqn: weighted Lichnerowicz}). 

\begin{theorem}[Weighted Witten formula]\label{thm: weighted witten}
Let $(M^n,g,f)$ be a weighted, spin, AE manifold of order $\tau$. Suppose that $f\in C^{2,\alpha}_{-\tau}(M)$, that  
\begin{align}\label{eqn: positive weighted mass assumptions}
    \Scal_f \geqslant  0, \qquad &\Scal_f \in L^1(M,g),  \qquad \frac{n-2}{2}<\tau<n-2,
\end{align}
and that $\psi_0$ is a spinor on $(M^n,g)$ which is constant at infinity, with $|\psi_0|\to 1$. Then there exists a $D_f$-harmonic spinor $\psi$ which is asymptotic to $\psi_0$ in the sense that $\psi-\psi_0\in C^{2,\alpha}_{-\tau}(M)$ and
\begin{equation}\label{eqn: weighted Witten formula}
    \mass_f(g)
        =4\int_M\left(|\nabla \psi|^2+\frac{1}{4}\Scal_f|\psi|^2\right)e^{-f} dV_g.
\end{equation}
\end{theorem}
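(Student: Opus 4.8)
The plan is to generalize the Parker–Taubes/Lee–Parker existence argument and Witten's integration-by-parts identity to the weighted setting, exploiting the unitary equivalence $U_f D U_f^{-1}=D_f$ from Proposition \ref{prop: unitary equivalence} wherever possible. First I would establish existence of the weighted Witten spinor. Write $\psi=\psi_0+\xi$, where $\psi_0$ is the spinor that is constant at infinity and $\xi$ is the correction term; then $D_f\psi=0$ becomes $D_f\xi=-D_f\psi_0$. Since $\psi_0$ is constant in the asymptotic trivialization and $f$ decays (its derivatives lie in the appropriate weighted Hölder space by the assumption $f\in C^{3,\alpha}_\delta$), the right-hand side $-D_f\psi_0=-D\psi_0+\tfrac12(\nabla f)\cdot\psi_0$ lies in $C^{1,\alpha}_{-\tau-1}$ (for the $D\psi_0$ part, using $\partial g=O(r^{-\tau-1})$) plus $C^{2,\alpha}_{\delta-1}$ (for the $\nabla f$ part); the hypothesis $\delta<\tau-n+2$ ensures the $\nabla f$ term decays at least as fast as needed. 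One then invokes the Fredholm theory for $D_f$ on weighted Sobolev/Hölder spaces: because $D_f=U_fDU_{-f}$ is conjugate to $D$ by a bounded operator that is asymptotic to the identity (as $f\to\text{const}$), $D_f$ is Fredholm on $C^{2,\alpha}_{-\tau}\to C^{1,\alpha}_{-\tau-1}$ with the same index as $D$, and one shows it is in fact an isomorphism using the weighted Lichnerowicz formula: if $D_f\xi=0$ with $\xi\in C^{2,\alpha}_{-\tau}$ and $-\tau$ in the appropriate range, integration by parts against $e^{-f}dV$ (no boundary term since $\xi$ decays) gives $\int_M(|\nabla^f\xi|^2+\tfrac14\Scal_f|\xi|^2)e^{-f}=0$; with $\Scal_f\ge0$ this forces $\nabla^f\xi=0$, hence $\xi$ has constant pointwise norm, hence $\xi\equiv0$ by decay. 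This yields a unique $\xi$ and the asymptotics $\psi-\psi_0\in C^{2,\alpha}_{-\tau}$. I would relegate the detailed weighted-space bookkeeping to the appendix, as the excerpt indicates.

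Next, the formula itself. The key identity is the weighted Lichnerowicz formula $D_f^2=-\Delta_f+\tfrac14\Scal_f$ (Proposition \ref{weighted lichnerowicz}), rewritten in the ``Weitzenböck'' divergence form adapted to the weighted measure: for a spinor $\psi$ on a large domain $\Omega_R$ bounded by the coordinate sphere $S_R$,
\begin{equation}
    \int_{\Omega_R}\langle D_f^2\psi,\psi\rangle e^{-f}\,dV
    =\int_{\Omega_R}\left(|\nabla^f\psi|^2-|D_f\psi|^2\right)e^{-f}\,dV
    +\int_{S_R}\left(\langle\nabla^f_\nu\psi,\psi\rangle+\langle\nu\cdot D_f\psi,\psi\rangle\right)e^{-f}\,dA,
\end{equation}
which follows from the definition of $\nabla^f$, the compatibility \eqref{eqn: weighted connection is weighted metric compatible}, and the divergence theorem applied to the weighted vector field (this is the weighted analogue of Witten's identity, obtained by conjugating the classical one by $U_f$ and absorbing $e^{-f}$). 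Taking $\psi$ to be the weighted Witten spinor, $D_f\psi=0$, so the bulk term becomes $\int_{\Omega_R}|\nabla^f\psi|^2e^{-f}$, and letting $R\to\infty$ the left side vanishes. It remains to show the boundary integral converges to $-\tfrac14\mass_f(g)$. Expanding $\nabla^f_\nu\psi=\nabla_\nu\psi-\tfrac12(\nabla_\nu f)\psi$ and using $\psi\to\psi_0$ with $|\psi_0|=1$, the $\nabla_\nu\psi$ part reproduces, via the standard asymptotic computation of Parker–Taubes/Lee–Parker, exactly $-\tfrac14\mass(g)$ (up to my sign/normalization conventions), while the $-\tfrac12(\nabla_\nu f)|\psi_0|^2 e^{-f}=-\tfrac12\langle\nabla f,\nu\rangle e^{-f}$ part integrates over $S_R$ to $-\tfrac12\int_{S_R}\langle\nabla f,\nu\rangle e^{-f}dA$, which is precisely $-\tfrac14$ of the second term in the definition \eqref{eqn: weighted mass defn} of $\mass_f(g)$. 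Finally I would expand $|\nabla^f\psi|^2=|\nabla\psi|^2-\langle\nabla f,\langle\nabla\psi,\psi\rangle\rangle+\tfrac14|\nabla f|^2|\psi|^2$ and combine with $-\Delta_f|\psi|^2$ terms — or, more cleanly, use the weighted Lichnerowicz formula directly in the form $0=\langle D_f^2\psi,\psi\rangle$ integrated to get $\int_M(|\nabla\psi|^2+\tfrac14\Scal_f|\psi|^2)e^{-f}=\tfrac14\mass_f(g)$ after identifying all boundary contributions; the cross terms in $|\nabla^f\psi|^2$ versus $|\nabla\psi|^2$ either cancel against $\Scal_f$ vs. the raw $\Scal$ reorganization or get pushed into the boundary and shown to vanish by the decay rates.

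The main obstacle I anticipate is the boundary term analysis — specifically, verifying that the cross terms produced by the discrepancy between $\nabla^f$ and $\nabla$ (and between $\Delta_f$ and $\Delta$, and between $\Scal_f$ and $\Scal$) reorganize exactly into the extra flux term $2\int_{S_R}\langle\nabla f,\nu\rangle e^{-f}dA$ and nothing else survives in the limit. This requires careful tracking of decay rates: $\psi-\psi_0=O(r^{-\tau})$, $\nabla\psi=O(r^{-\tau-1})$, $\nabla f=O(r^{-\delta-1})$ with $\delta<\tau-n+2$, and the sphere has area $O(r^{n-1})$, so one must check each product decays faster than $r^{-(n-1)}$ except for the two terms that are designed to survive. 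The condition $\tau>\frac{n-2}{2}$ is exactly what makes the genuinely quadratic-in-$(\psi-\psi_0)$ boundary terms vanish (as in the classical case), and $\delta<\tau-n+2<0$ is what controls the $f$-dependent cross terms; confirming these inequalities suffice is the delicate point. A secondary subtlety is ensuring the bulk integral $\int_M|\nabla\psi|^2e^{-f}dV$ is finite and that the limit $R\to\infty$ of the boundary integral genuinely exists (not merely $\limsup$), which again follows from the decay rates once the Fredholm/regularity step has given the stated asymptotics for $\psi$.
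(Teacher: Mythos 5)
Your plan follows the same overall strategy as the paper's proof (given in Appendix \ref{sec: appendix}): perturb a spinor that is constant at infinity, invert an elliptic operator on weighted H\"older spaces using the weighted Lichnerowicz formula (\ref{eqn: weighted Lichnerowicz}) and $\Scal_f\geq 0$ for injectivity, then run a Bochner--Stokes argument and identify the boundary flux with $\mass_f(g)$. Two execution differences are worth noting. First, you propose to invert the first-order operator $D_f:C^{2,\alpha}_{-\tau}\to C^{1,\alpha}_{-\tau-1}$ directly, whereas the paper inverts the second-order operator $D_f^2:C^{2,\alpha}_{-\tau}\to C^{0,\alpha}_{-\tau-2}$ (Lemma \ref{lem: D^2_f is an iso}), obtaining surjectivity by exhibiting $D_f^2$ as a compact perturbation of the isomorphism $\Delta_f$, and then deducing $D_f\psi=0$ from $D_f^2\psi=0$ by a separate integration by parts. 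Your route is viable, but injectivity alone does not give an isomorphism: you still need to establish that $D_f$ has index zero (equivalently, trivial cokernel at the dual weight) between those spaces, which is exactly the step the paper's second-order detour supplies for free via the Laplacian.

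Second, and more substantively, your central displayed identity is misstated, and the gap it leaves is precisely the point you defer to the end. The connection Laplacian of $\nabla^f$ is \emph{not} the drift Laplacian $\Delta_f=\Delta-\nabla_{\nabla f}$ appearing in (\ref{eqn: weighted Lichnerowicz}); by the Remark following Proposition \ref{weighted lichnerowicz} they differ by $\tfrac14(2\Delta f-|\nabla f|^2)$, so the Weitzenb\"ock identity adapted to $\nabla^f$ carries the bulk term $|\nabla^f\psi|^2+\tfrac14\Scal|\psi|^2-|D_f\psi|^2$ with the \emph{unweighted} scalar curvature, not $|\nabla^f\psi|^2-|D_f\psi|^2$ alone and not $|\nabla^f\psi|^2+\tfrac14\Scal_f|\psi|^2$. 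The pairing $\bigl(|\nabla\psi|^2,\tfrac14\Scal_f\bigr)$ demanded by (\ref{eqn: weighted Witten formula}) is the one produced by the drift Laplacian, whose Dirichlet form with respect to $e^{-f}dV$ involves the unmodified connection $\nabla$. The two pairings differ by the exact divergence $-\tfrac12\Div(e^{-f}|\psi|^2\nabla f)$, whose flux is exactly half of the correction term in the definition (\ref{eqn: weighted mass defn}) of $\mass_f$ --- so your plan is repairable, but this reorganization is the heart of the computation rather than a routine check, and as written your bookkeeping would double-count or drop part of the $\langle\nabla f,\nu\rangle$ flux. The paper avoids the issue by taking the Bochner vector field $X=\Rea\langle\nabla_i\psi,\psi\rangle e^{-f}e_i$, for which $\Div(X)=\bigl(\Rea\langle\Delta_f\psi,\psi\rangle+|\nabla\psi|^2\bigr)e^{-f}$, so that the weighted Lichnerowicz formula yields the integrand $\bigl(|\nabla\psi|^2+\tfrac14\Scal_f|\psi|^2\bigr)e^{-f}$ directly and only one boundary term remains to be expanded asymptotically (which the paper does via the operator $L_i^f=\nabla_i^f+e_i\cdot D_f$). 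I recommend adopting that form of the vector field; your remaining decay analysis --- $\tau>\tfrac{n-2}{2}$ killing the quadratic boundary terms and $\delta<\tau-n+2$ controlling the $f$-cross terms --- matches the paper's.
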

\begin{proof}
Here the proof is given under the additional natural assumptions that $\Scal\geq 0$, $\Scal\in L^1(M,g)$ and $|\nabla f|=O(r^{-(n-1)})$. The additional assumptions $\Scal\geq 0$, $\Scal\in L^1(M,g)$ ensure the existence of an (unweighted) Witten spinor $\psi$ \cite{LP}. Further, the assumption $|\nabla f|=O(r^{-(n-1)})$ is satisfied if $\Scal_f=0$; see \cite[Prop.\ (2.2)]{DO20}. In Appendix \ref{subsec: appendix Witten}, a proof of the general case is given.

By (\ref{eqn: unitary equivalence of Dirac and weighted Dirac}), if $D\psi=0$, then the spinor $\psi_f=e^{f/2}\psi$ is $D_f$-harmonic. Since 
\begin{align}
    \nabla\psi
    &=\nabla(e^{-f/2}\psi_f)
    =e^{-f/2}\left(\nabla \psi_f-\frac{1}{2}df\otimes \psi_f\right), \\
    \nabla \psi_f
    &=\nabla (e^{f/2}\psi)=e^{f/2}\nabla \psi+\frac{1}{2}df\otimes \psi_f,
\end{align}
it follows that
\begin{align}
    |\nabla\psi|^2
    &=e^{-f}\left(|\nabla \psi_f|^2+\frac{1}{4}|\nabla f|^2|\psi_f|^2-\Rea\langle \nabla \psi_f,df\otimes \psi_f\rangle \right) \\
    &=e^{-f}\left(|\nabla \psi_f|^2+\frac{1}{4}|\nabla f|^2|\psi_f|^2-\frac{1}{2}|\nabla f|^2|\psi_f|^2-e^{f}\Rea\langle \nabla \psi,df\otimes \psi\rangle\right) \nonumber\\
    &=e^{-f}\left(|\nabla \psi_f|^2-\frac{1}{4}|\nabla f|^2|\psi_f|^2\right)-\Rea\langle \nabla_{\nabla f} \psi, \psi\rangle. \nonumber
\end{align}
By the definition of the weighted scalar curvature (\ref{eqn: defn of weighted scalar curvature}) and Witten's formula for the mass,
\begin{align}
    \frac{1}{4}\mass(g)
    &=\int_M\left(|\nabla \psi|^2+\frac{1}{4}\Scal |\psi|^2\right) dV_g \\
    &=\int_M\left(|\nabla \psi_f|^2+\frac{1}{4}(\Scal-|\nabla f|^2) |\psi_f|^2\right) e^{-f}dV_g 
        -\Rea\int_M\langle \nabla_{\nabla f} \psi, \psi\rangle \,dV_g \nonumber\\
    &=\int_M\left(|\nabla \psi_f|^2+\frac{1}{4}\Scal_f|\psi_f|^2-\frac{1}{2}(\Delta f)|\psi_f|^2\right) e^{-f}dV_g 
        -\Rea\int_M\langle \nabla_{\nabla f} \psi, \psi\rangle \,dV_g \nonumber\\
    &=\int_M\left(|\nabla \psi_f|^2+\frac{1}{4}\Scal_f|\psi_f|^2\right) e^{-f}dV_g 
        -\int_M \left(\frac{1}{2}(\Delta f)|\psi|^2+\Rea\langle\nabla_{\nabla f} \psi, \psi\rangle \right) dV_g. \nonumber
\end{align}
Integrating the last term by parts and using the fact that $|\psi_f|\to 1$ at infinity gives
\begin{align}
    \frac{1}{4}\mass(g)
    &=\int_M\left(|\nabla \psi_f|^2+\frac{1}{4}\Scal_f|\psi_f|^2\right) e^{-f}dV_g 
        -\lim_{\rho\to \infty}\frac{1}{2}\int_{S_{\rho}}\langle \nabla f,\nu\rangle |\psi_f|^2\,e^{-f}dA \\
        &=\int_M\left(|\nabla \psi_f|^2+\frac{1}{4}\Scal_f|\psi_f|^2\right) e^{-f}dV_g 
        -\lim_{\rho\to \infty}\frac{1}{2}\int_{S_{\rho}}\langle \nabla f,\nu\rangle\,e^{-f}dA. \nonumber
\end{align}
By the assumption $f\to 0$ at infinity and $|\nabla f|=O(r^{-(n-1)})$, the latter limit exists and is finite, since the area of $S_{\rho}$ is of order $\rho^{n-1}$. 
\end{proof}

The following theorem generalizes Schoen-Yau \cite{SY} and Witten's \cite{W} positive mass theorem to the weighted (spin) setting. 

\begin{theorem}
[Positive weighted mass theorem]\label{thm: positive weighted mass}
Let $(M^n,g,f)$ be a weighted, spin, AE manifold satisfying the assumptions of Theorem \ref{thm: weighted witten}.
Then $\mass_f(g)\geqslant  0$, with equality if and only if $(M^n,g)$ is isometric to $(\R^n,g_{\euc})$ and $\int_{\R^n}(\Delta_f f)\,e^{-f}dV=0$.
\end{theorem}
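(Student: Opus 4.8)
The plan is to deduce both assertions directly from the weighted Witten formula (\ref{eqn: weighted Witten formula}) of Theorem \ref{thm: weighted witten}, mimicking the classical argument. For nonnegativity: apply Theorem \ref{thm: weighted witten} to any constant spinor $\psi_0$ of norm $1$ at infinity to obtain a $D_f$-harmonic spinor $\psi$ satisfying (\ref{eqn: weighted Witten formula}); since $\Scal_f\geq 0$, $|\nabla\psi|^2\geq 0$ and $e^{-f}>0$, the integrand in (\ref{eqn: weighted Witten formula}) is pointwise nonnegative, whence $\mass_f(g)\geq 0$.

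For the equality case, first the ``only if'' direction. If $\mass_f(g)=0$ then the integrand in (\ref{eqn: weighted Witten formula}) vanishes identically, so $\nabla\psi\equiv 0$ and $\Scal_f|\psi|^2\equiv 0$ on $M$. A $\nabla$-parallel spinor has constant pointwise norm (the standard spin connection is compatible with the standard Hermitian metric), and $|\psi|\to 1$ at infinity, so $|\psi|\equiv 1$; hence $\psi$ is nowhere zero and $\Scal_f\equiv 0$. Moreover $0=D_f\psi=D\psi-\frac12(\nabla f)\cdot\psi=-\frac12(\nabla f)\cdot\psi$, since $D\psi=e_i\cdot\nabla_i\psi=0$; as Clifford multiplication by a nonzero covector is invertible, $\nabla f\equiv 0$, so $\Delta_f f\equiv 0$ and a fortiori $\int_M(\Delta_f f)e^{-f}dV=0$. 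It remains to see $(M,g)\cong(\R^n,g_{\euc})$: repeating the construction for a basis of the ($2^{\lfloor n/2\rfloor}$-dimensional complex) space of constant spinors at infinity yields that many $\nabla$-parallel spinors on $M$, which are linearly independent since a vanishing combination would be a parallel spinor of identically zero norm, hence of zero asymptotic value. These parallel spinors span $\Sigma_pM$ at every point, so the curvature endomorphism $R(X,Y)=[\nabla_X,\nabla_Y]-\nabla_{[X,Y]}$ annihilates all of $\Sigma_pM$; by faithfulness of the Clifford action of $\Lambda^2T^*_pM\cong\mathfrak{so}(n)$ on $\Sigma_pM$, the Riemann tensor vanishes and $(M,g)$ is flat. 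A complete flat AE manifold of positive order has Euclidean volume growth, which forces a trivial deck group, so $(M,g)$ is isometric to $(\R^n,g_{\euc})$ --- precisely the rigidity step of the classical positive mass theorem \cite{W,LP,PT}.

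Conversely, if $(M,g)\cong(\R^n,g_{\euc})$ and $\int_{\R^n}(\Delta_f f)e^{-f}dV=0$, then $\mass(g)=0$ and, by the divergence theorem (as noted after (\ref{eqn: weighted mass defn}), using $f\to 0$ at infinity and the decay of $\nabla f$ so the integrals converge), the boundary term in (\ref{eqn: weighted mass defn}) equals $2\int_{\R^n}(\Delta_f f)e^{-f}dV=0$; hence $\mass_f(g)=0$.

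The main obstacle is the flatness step in the rigidity argument. Reading off $\nabla\psi\equiv 0$, $\Scal_f\equiv 0$ and $\nabla f\equiv 0$ from $\mass_f(g)=0$ is immediate, since the weight $e^{-f}>0$ does not affect pointwise vanishing; but one must still carry out the standard Witten--Lee--Parker passage from a full space of parallel spinors to flatness and then to $M\cong\R^n$, in particular checking that parallel spinors with linearly independent asymptotic values are independent and that a complete flat AE manifold of positive order is genuinely Euclidean space. This is exactly the unweighted rigidity statement, to which the weighted problem reduces once $\mass_f(g)=0$ is exploited.
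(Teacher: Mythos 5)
Your proposal is correct and follows essentially the same route as the paper: nonnegativity read off from the weighted Witten formula, and in the equality case a full basis of parallel spinors forcing flatness, followed by the boundary-term computation on $\R^n$ for the integral condition. The only minor divergence is the flatness step, where the paper passes from parallel spinors to a parallel frame of $TM$ via the vector fields $V_\phi$ defined by $\langle V_\phi,X\rangle=\Ima\langle\phi,X\cdot\phi\rangle$, whereas you invoke faithfulness of the action of $\Lambda^2T^*_pM\cong\mathfrak{so}(n)$ on spinors; both are standard, and you additionally supply details the paper leaves implicit (linear independence of the parallel spinors, the observation that $D_f\psi=D\psi=0$ forces $\nabla f\equiv 0$, and the passage from flat AE to Euclidean space).
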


\begin{proof}
    Theorem \ref{thm: weighted witten} provides the existence of a weighted Witten spinor $\psi$ satisfying the weighted Witten formula (\ref{eqn: weighted Witten formula}).
    This shows that $\mass_f(g)\geq 0$ if $\Scal_f\geq 0$. The proof of the equality statement resembles Witten's proof of the equality statement for the positive mass theorem: equality implies that $\nabla \psi=0$, and since there exist $\mathrm{rank}( \Sigma M)$ possible linearly independent constant spinors at infinity $\psi_0$ to which $\psi$ is asymptotic, $\Sigma M$ admits a basis of parallel spinors. Since the map $\Sigma M\to TM$ sending a spinor $\phi$ to the vector field $V_{\phi}$ defined by
    \begin{equation}
        \langle V_{\phi},X\rangle =\Ima\langle \phi,X\cdot \phi\rangle \qquad \text{for all } X\in \Gamma(TM),
    \end{equation}
    is surjective, and since $V_{\phi}$ is a parallel vector field if $\phi$ is a parallel spinor, $TM$ admits a basis of parallel vector fields. Thus $(M^n,g)$ is flat.
    
    Finally, since $\mass(g_{\euc})=0$, integration by parts and $\mass_f(g_{\euc})=0$ imply that 
    \begin{align}
        0
        &=\mass_f(g_{\euc})
        =\lim_{\rho\to \infty}2\int_{S_{\rho}}\langle \nabla f,\nu\rangle \,e^{-f}dA 
        =-2\int_{\R^n}(\Delta_f f)\,e^{-f}dV.
    \end{align}
\end{proof}

\subsection{Weighted mass and Ricci flow.}

Given an asymptotically Euclidean manifold $(M^n,g)$, define the \emph{renormalized Perelman entropy} as
\begin{equation}\label{eqn: defn of lambdaALE}
    \lambdaALE(g)=\inf_{u-1\in C^{\infty}_c(M)}\int_M\left(4|\nabla u|^2+\Scal u^2\right)dV -\mass(g).
\end{equation}
Note that $\lambdaALE(g)$ can equivalently be defined as the infimum of $\int_M\Scal_f\,e^{-f}dV-\mass_f(g)$, over all $f\in C^{\infty}_c(M)$.
If $(M^n,g)$ admits a Witten spinor $\psi$, then testing the right-hand-side of the above equation with $u=|\psi|$ gives that $\lambdaALE(g)\leq 0$, by Kato's inequality, $|\nabla |\psi||\leq |\nabla \psi|$. As mentioned in the Introduction, Ricci flow is the gradient flow of $\lambdaALE$ on AE manifolds and $\lambdaALE$ has various advantages over the ADM mass in the context of Ricci flow; see the Introduction and also \cite{DO20}.

\begin{theorem}\label{thm: weighted mass = lambda}
Let $(M^n,g)$ be an asymptotically Euclidean manifold of order $\tau>\frac{n-2}{2}$ and with nonnegative scalar curvature. 
Then there exists a solution $f\in C^{2,\alpha}_{-\tau}(M)$ of the elliptic equation $\Scal_{f}=0$, and the $f$-weighted mass satisfies
\begin{equation}
    \mass_f(g)=-\lambdaALE(g).
\end{equation}
\end{theorem}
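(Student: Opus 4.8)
The plan is to prove this in two stages: first establish existence and asymptotics of a solution $f$ to $\Scal_f = 0$, then compute $\mass_f(g)$ and identify it with $-\lambdaALE(g)$. For the existence step, I would unravel the equation $\Scal_f = 0$, i.e.\ $\Scal + 2\Delta f - |\nabla f|^2 = 0$. The natural substitution is $w = e^{-f/2}$, which linearizes the gradient term: one computes $\Delta w = -\tfrac{1}{2}w\,\Delta f + \tfrac{1}{4}w\,|\nabla f|^2$, so that $\Scal_f = 0$ becomes the linear equation $-4\Delta w + \Scal\, w = 0$, i.e.\ $w$ is a positive solution of the (rescaled) conformal-type Laplacian with the boundary condition $w \to 1$ at infinity (corresponding to $f \to 0$). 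On an AE manifold with $\Scal \geq 0$ and $\Scal$ decaying at the appropriate order, the operator $-4\Delta + \Scal$ is an isomorphism between the relevant weighted Hölder spaces (this is standard Fredholm theory for AE manifolds — see the Appendix's conventions, and \cite{Ba86,LP}), so one solves for $w - 1 \in C^{2,\alpha}_{-\tau}(M)$; the maximum principle, using $\Scal \geq 0$, forces $w > 0$ so that $f = -2\log w$ is globally defined and lies in $C^{2,\alpha}_{-\tau}(M)$ with $f \to 0$ at infinity. (One should note the mild point that $\Scal \in L^1$ and $\tau > \tfrac{n-2}{2}$ suffice for the mass to be defined; the stronger decay in Theorem \ref{thm: weighted witten} is not needed here, only $\tau > \tfrac{n-2}{2}$.)

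For the second stage, I would first rewrite the defining relation. Since $\Scal_f = 0$, the weighted mass is $\mass_f(g) = \mass(g) + 2\lim_{R\to\infty}\int_{S_R}\langle \nabla f,\nu\rangle e^{-f}dA$, and by the divergence theorem the boundary term equals $2\int_M (\Delta_f f)\,e^{-f}dV = 2\int_M(\Delta f - |\nabla f|^2)e^{-f}dV$. On the other hand, I would evaluate $\lambdaALE(g)$ using the equivalent formulation noted just before the theorem: $\lambdaALE(g) = \inf_h\big(\int_M \Scal_h e^{-h}dV - \mass_h(g)\big)$ over compactly supported $h$, and argue that the infimum is attained (in the limit) at $h = f$. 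The Euler–Lagrange equation of the functional $h \mapsto \int_M \Scal_h e^{-h}dV - \mass_h(g) = \int_M(\Scal + 2\Delta h - |\nabla h|^2 - 2\Delta_h h)e^{-h}dV = \int_M(\Scal + |\nabla h|^2)e^{-h}\,dV$ — using that $\int (2\Delta h - 2\Delta_h h)e^{-h} = \int 2|\nabla h|^2 e^{-h}$ after integration by parts, wait, more carefully: $\int_M(2\Delta h - |\nabla h|^2)e^{-h}dV - 2\int_M(\Delta_h h)e^{-h}dV$; since $\int_M (\Delta_h h)e^{-h}dV = -\int_M|\nabla h|^2 e^{-h}dV$, this combines to $\int_M(\Scal + |\nabla h|^2)e^{-h}dV$. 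Its first variation vanishes precisely when $\Scal_h = 0$, which is our $f$; substituting back, $\int_M \Scal_f e^{-f}dV - \mass_f(g) = -\mass_f(g)$, giving the claimed identity $\mass_f(g) = -\lambdaALE(g)$ once one checks the infimum is genuinely achieved by (a sequence converging to) $f$ rather than merely being a critical point.

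The main obstacle I anticipate is precisely this last point: $f$ is not compactly supported, so it is not an admissible competitor in the infimum defining $\lambdaALE(g)$, and one must show (i) that approximating $f$ by compactly supported $h_k$ (e.g.\ $f$ times a cutoff) yields $\int_M \Scal_{h_k}e^{-h_k}dV - \mass_{h_k}(g) \to -\mass_f(g)$, using the decay $f \in C^{2,\alpha}_{-\tau}(M)$ to control the error terms supported on the annulus $\{R \leq r \leq 2R\}$, and (ii) that no sequence does better — i.e.\ that $f$ is a minimizer and not a saddle. For (ii), the cleanest route is to use convexity: the functional $\int_M(\Scal + |\nabla h|^2)e^{-h}dV$, or rather its reformulation after the substitution $w = e^{-h/2}$, becomes $4\int_M|\nabla w|^2 + \Scal\,w^2\,dV$ with the constraint $w \to 1$, which is manifestly convex in $w$, so its unique critical point is the global minimum; this is exactly the variational characterization of $w$ from the first stage, and it matches the $u = w$ form in the definition (\ref{eqn: defn of lambdaALE}). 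Thus the two stages are linked: the same positive function $w$ both solves $\Scal_f = 0$ and minimizes the renormalized Perelman entropy, and chasing the substitution $f = -2\log w$ through both sides yields the equality. A secondary technical nuisance is checking that all boundary integrals converge and that the integrations by parts are justified given only the decay rates assumed — this is routine but must be stated, matching the treatment in \cite{DO20,LP}.
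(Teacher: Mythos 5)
Your proposal is correct and follows essentially the same route as the paper: the substitution $w=e^{-f/2}$ turning $\Scal_f=0$ into the linear equation $-4\Delta w+\Scal w=0$ with $w\to 1$ (solved by Fredholm theory and positivity of $-4\Delta+\Scal$ when $\Scal\geq 0$, as in \cite{DO20,Ha}), and the identification of the value of the functional at this $w$ with the boundary flux $-2\lim_{R\to\infty}\int_{S_R}\langle\nabla f,\nu\rangle e^{-f}dA$, whence $\lambdaALE(g)=-\mass_f(g)$. The paper reaches this by a direct integration by parts at the minimizer rather than via the Euler--Lagrange/convexity argument, but the two computations are the same, and your explicit attention to the approximation by compactly supported competitors and to convexity (guaranteeing the critical point is the infimum) only makes the argument more self-contained.
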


\begin{proof}
By \cite[(2.3)]{DO20}, there exists a strictly positive minimizer $w=e^{-f/2}$ of (\ref{eqn: defn of lambdaALE}) with $w-1\in C^{2,\alpha}_{-\tau}(M)$ satisfying $-4\Delta w +\Scal w=0$. 
Since $w\to 1$ at infinity, integration by parts implies
\begin{align}
    \inf_{u-1\in C^{\infty}_c(M)}\int_M\left(4|\nabla u|^2+\Scal u^2\right)dV
    &=\int_M\left(4|\nabla w|^2+\Scal w^2\right)dV  \\      
    &=\lim_{\rho\to \infty}\int_{S_{\rho}}4\langle \nabla w,\nu\rangle w\,dA \nonumber \\      
    &=-2\lim_{\rho\to\infty}\int_{S_{\rho}}\langle \nabla f,\nu\rangle \,e^{-f}dA. \nonumber 
\end{align}
The result now follows immediately from the definition (\ref{eqn: weighted mass defn}) of $\mass_f(g)$ and that of $\lambdaALE$, (\ref{eqn: defn of lambdaALE}).

Note that \cite[Eqn.\ (2.3)]{DO20} is stated for ALE manifolds in the neighborhood of a Ricci-flat ALE manifold, to ensure the existence and uniqueness of $f$ by the positivity of $-4\Delta+\Scal$ thanks to a Hardy inequality; see \cite[Prop.\ 1.12]{DO20}. However, the same proof holds under the above assumptions on $(M^n,g)$ since the scalar curvature is nonnegative and the operator $-4\Delta+\Scal$ is therefore positive; see the proof of \cite[Thm.\ 2.6]{Ha} for a similar argument.
\end{proof}

It has been proven in \cite[Thm.\ 2.2]{Li18} that the AE conditions are preserved along Ricci flow (with the same coordinate system) as long as the flow is nonsingular. An \emph{asymptotically Euclidean Ricci flow} is defined to be any Ricci flow starting at an AE manifold.
\begin{corollary}
[Monotonicity of weighted mass]\label{cor: monotonicity of weighted mass}
Let $(M^n,g(t))_{t\in I}$ be an asymptotically Euclidean Ricci flow with nonnegative scalar curvature.
Let $f:M\times I\to \R$ be the time-dependent family of functions solving $\Scal_{f}=0$ and $f\to 0$ at infinity, at each time $t\in I$. Then 
\begin{equation}\label{eqn: time derivative of weighted mass}
    \frac{d}{dt}\mass_{f}(g)
    =-2\int_M|\Ric+\Hess_{f}|^2e^{-f} dV \leq 0.
\end{equation}
In particular, $\mass_{f}(g)$ is monotone decreasing along the Ricci flow, and is constant only if $(M^n,g(t))$ is Ricci-flat.
\end{corollary}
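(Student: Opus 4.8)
The plan is to combine Theorem \ref{thm: weighted mass = lambda}, which gives $\mass_f(g) = -\lambdaALE(g)$ at each fixed time, with the first-variation formula for $\lambdaALE$ established in \cite{DO20}. The key point is that $\lambdaALE(g)$ is realized by a genuine minimizer $w = e^{-f/2}$ (rather than merely an infimum), which is exactly the statement of Theorem \ref{thm: weighted mass = lambda}; this is what allows the envelope/Hamilton-type argument to go through so that only the explicit $\partial_t g$ term survives differentiation.

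First I would differentiate the identity $\mass_f(g(t)) = -\lambdaALE(g(t))$ in $t$. Since $\lambdaALE$ is obtained from the Perelman-type functional $\mathcal{W}(g,f) := \int_M(4|\nabla w|^2 + \Scal w^2)\,dV - \mass(g)$, with $w = e^{-f/2}$, by minimizing over $w - 1 \in C^\infty_c$, and since the minimizer $f = f(t)$ is smooth in $t$ (by elliptic regularity and the implicit function theorem applied to $\Scal_f = 0$, using positivity of $-4\Delta + \Scal$), the variation of the minimizer contributes nothing: at a minimum, the $f$-derivative of the functional vanishes. Hence $\frac{d}{dt}\lambdaALE(g(t))$ equals the partial derivative of the functional with respect to $g$ alone, evaluated at $(g(t), f(t))$, along $\partial_t g = -2\Ric$. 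This is precisely the computation carried out in \cite{DO20} for the gradient flow property of $\lambdaALE$, and it yields $\frac{d}{dt}\lambdaALE(g(t)) = 2\int_M |\Ric + \Hess_f|^2 e^{-f}\,dV$ (the ADM boundary term in the variation of $\mass(g)$ cancels against the boundary contributions, exactly as in the closed-manifold case where $\frac{d}{dt}\lambda_{\mathrm P} = 2\int|\Ric + \Hess_f|^2 e^{-f}\,dV$). Combining with $\mass_f = -\lambdaALE$ gives \eqref{eqn: time derivative of weighted mass}.

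The equality case follows immediately: $\frac{d}{dt}\mass_f(g) = 0$ forces $\Ric + \Hess_f = 0$, i.e. $(M,g,f)$ is a gradient steady soliton; but on an AE manifold with $f \to 0$ at infinity, tracing gives $\Scal + \Delta f = 0$, and combined with $\Scal_f = \Scal + 2\Delta f - |\nabla f|^2 = 0$ one gets $\Delta f = |\nabla f|^2$, i.e. $\Delta_f f = 0$; integrating against $e^{-f}$ over the AE end (the boundary term vanishes since $|\nabla f| = O(r^{-(n-1)})$) forces $\nabla f \equiv 0$, hence $f \equiv 0$ and $\Ric \equiv 0$, so $g$ is Ricci-flat (and in fact AE Ricci-flat, hence Euclidean by the rigidity in the positive mass theorem, though stating Ricci-flatness suffices).

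The main obstacle is justifying the interchange of $\frac{d}{dt}$ with the minimization, i.e. that the minimizer $f(t)$ depends smoothly (or at least $C^1$) on $t$ in the appropriate weighted H\"older space and that the ``envelope theorem'' applies despite the noncompactness of $M$ and the boundary term $-\mass(g)$ in the functional. Here I would invoke the framework of \cite{DO20}: the linearization $-4\Delta + \Scal$ of the constraint $\Scal_f = 0$ is an isomorphism between the relevant weighted H\"older spaces (positivity from $\Scal \geq 0$), so the implicit function theorem gives $t \mapsto f(t)$ of class $C^1$ into $C^{2,\alpha}_{-\tau}(M)$; the decay $f(t) \in C^{2,\alpha}_{-\tau}$ with $\tau > (n-2)/2$ is exactly enough to make all the integration-by-parts manipulations and the boundary-term cancellations in the first-variation computation legitimate, as verified in \cite{DO20}. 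With that in hand the argument is a direct transcription of the closed-manifold monotonicity of Perelman's $\lambda$-functional.
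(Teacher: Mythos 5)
Your derivation of the derivative formula is essentially the paper's: both invoke Theorem \ref{thm: weighted mass = lambda} to write $\mass_f(g)=-\lambdaALE(g)$ and then quote the first variation of $\lambdaALE$ from \cite{DO20} (Prop.\ 2.3 and 3.13 there), with the observation that the closeness-to-Ricci-flat hypothesis of Deruelle--Ozuch is only needed for the existence of $f$ and can be replaced by $\Scal\geq 0$; your extra discussion of the envelope argument and the implicit-function-theorem regularity of $t\mapsto f(t)$ is a reasonable expansion of what the paper leaves to the citation. Where you genuinely diverge is the equality case: the paper simply cites \cite[Prop.\ 2.6]{DK} (any ALE steady soliton with $\nabla f\to 0$ at infinity is Ricci-flat), whereas you give a self-contained argument exploiting the constraint $\Scal_f=0$ to get $\Delta_f f=0$ and then rigidity. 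That route is sound and arguably more transparent, but as written one step misfires: integrating $\Delta_f f\, e^{-f}=\Div(e^{-f}\nabla f)$ over $M$ only yields $0=\lim_R\int_{S_R}\langle\nabla f,\nu\rangle e^{-f}dA$, a tautology, not $\nabla f\equiv 0$. You need to multiply by $f$ first, i.e.
\begin{equation*}
0=\int_M f\,(\Delta_f f)\,e^{-f}dV=-\int_M|\nabla f|^2e^{-f}dV+\lim_{R\to\infty}\int_{S_R}f\,\langle\nabla f,\nu\rangle e^{-f}dA,
\end{equation*}
where the boundary term vanishes because $f=O(r^{-\tau})$ and $|\nabla f|=O(r^{-\tau-1})$ with $\tau>\frac{n-2}{2}$ (equivalently, note $\Delta e^{-f}=(|\nabla f|^2-\Delta f)e^{-f}=0$ and apply the maximum principle to the bounded harmonic function $e^{-f}\to 1$). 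With that one-line repair your equality argument is complete and gives $f\equiv 0$, $\Ric\equiv 0$, matching the paper's conclusion without the external citation.
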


\begin{proof}
Since $\mass_f(g)=-\lambda_{ALE}(g)$, equation (\ref{eqn: time derivative of weighted mass}) follows from the formula for the first variation of $\lambdaALE$, which can be found in \cite[Prop.\ 2.3 and 3.13]{DO20}.
Once again, the assumptions of closeness to a Ricci-flat ALE metric of Deruelle-Ozuch can be replaced by the nonnegativity of scalar curvature. 
Their closeness assumption is again only used to ensure the existence of $f$. Note that in contrast with Perelman's monotonicity for closed manifolds, which is proved by letting $f$ evolve {\it parabolically} backwards in time, the monotonicity formula (\ref{eqn: time derivative of weighted mass}) uses the fact that $f$ solves the {\it elliptic} equation $\Scal_f=0$ at each time.

To prove the equality statement, note that formula \eqref{eqn: time derivative of weighted mass} implies that $\mass_f(g)$ is constant if and only if $(M^n,g,f)$ is a steady Ricci soliton. The proof is completed by using \cite[Prop.\ 2.6]{DK}: any ALE steady soliton with $\nabla f\to 0$ at infinity is Ricci flat.
\end{proof}

\appendix
\section{Appendix}
\label{sec: appendix}
This appendix provides a proof of the general case of Theorem \ref{thm: weighted witten} on the existence of a weighted Witten spinor satisfying the weighted Witten formula. In Section \ref{subsec: weighted mass}, a simple and illustrative proof was given under natural, albeit more restrictive assumptions.

Let $(M^n,g)$ be an asymptotically Euclidean (AE), Riemannian spin manifold of order $\tau$. The asymptotic coordinates define a positive function $r$ on $M$, which equals the Euclidean distance to the origin on $M\setminus K$, and which can be extended to a smooth function which is bounded below by 1 on all of $M$. 

Using $r$, the weighted $C^k$ space $C^k_{\beta}(M)$ is defined for $\beta\in \R$ as the set of $C^k$ functions $u$ on $M$ for which the norm
\begin{equation}
    \|u\|_{C^k_{\beta}}
        =\sum_{i=0}^k\sup_Mr^{-\beta +i}|\nabla^iu|
\end{equation}
is finite. The weighted H\"older space $C^{k,\alpha}_{\beta}(M)$ is defined for $\alpha \in (0,1)$ as the set of $u\in C^k_{\beta}(M)$ for which the norm
\begin{equation}
    \|u\|_{C^{k,\alpha}_{\beta}}
        =\|u\|_{C^k_{\beta}}
            +\sup_{x,y}\;(\min\{r(x),r(y)\})^{-\beta+k+\alpha}\frac{|\nabla^ku(x)-\nabla^ku(y)|}{d(x,y)^{\alpha}}
\end{equation}
is finite.\footnote{The meaning of ``weighted'' in ``weighted H\"older spaces'' is distinct from its meaning in ``weighted manifolds.''} These definitions of weighted H\"older spaces coincide with those of \cite[\S 9]{LP}. In particular, the index $\beta$ denotes the {\it order of growth}: functions in $C^{k,\alpha}_{\beta}(M)$ grow at most like $r^{\beta}$. Note that the definitions of the weighted function spaces depend on the ``distance function'' $r$, and thereby on the choice of asymptotic coordinates. However, it is easy to see that $r$ is uniformly equivalent to the geodesic distance from an arbitrary fixed point in $M$ as $r\to \infty$, hence all choices of $r$ define equivalent norms. For the remainder of this appendix, fix $\alpha\in (0,1)$.

\subsection{Existence of weighted Witten spinors.}
\label{subsec: appendix Witten}

Let $f\in C^{\infty}(M)$ and let $D_f$ be the weighted Dirac operator associated with the weighted spin connection (\ref{eqn: weighted spin connection}) defined by $f$, which satisfies the weighted Lichnerowicz formula (\ref{eqn: weighted Lichnerowicz}).

\begin{lemma}\label{lem: D^2_f is an iso}
On a weighted, AE, spin manifold $(M^n,g,f)$ satisfying the hypotheses of Theorem \ref{thm: weighted witten}, the operator
\begin{equation}
    D_f^2:C^{2,\alpha}_{-\tau}(M)\to C^{0,\alpha}_{-\tau-2}(M)
\end{equation}
is an isomorphism.
\end{lemma}

\begin{proof}
To show injectivity, suppose $D_f^2\xi=0$ for some $\xi \in C^{2,\alpha}_{-\tau}(M)$. Then $\xi=O(r^{-\tau})$ and $\nabla \xi=O(r^{-\tau-1})$. Applying the weighted Lichnerowicz formula and integration by parts (the boundary term vanishes because $\tau > (n-2)/2$), it follows that
\begin{align}
    0
    &=\int_M\langle D^2_f\xi,\xi\rangle \,e^{-f} dV_g 
    =\int_M\left(-\langle \dL_f\xi,\xi\rangle +\frac{1}{4}\Scal_f |\xi|^2 \right)e^{-f} dV_g 
    =\int_M \left( |\nabla \xi|^2+\frac{1}{4}\Scal_f |\xi|^2\right)e^{-f} dV_g.
\end{align}
Since $\Scal_f\geq 0$, this shows that $\nabla \xi=0$, so $\nabla |\xi|^2=0$. Thus $|\xi|$ is a constant, which is zero since $\xi$ vanishes at infinity. Thus $D_f^2$ is injective.

The weighted Lichnerowicz formula implies that $D_f^2=-\Delta +\nabla_{\nabla f}+\frac{1}{4}\Scal_f$. Since $f\in C^{2,\alpha}_{-\tau}$ and $g$ is smooth and AE of order $\tau$, it follows that $\nabla f\in C^{1,\alpha}_{-\tau-1}$ and $\Scal_f\in C^{0,\alpha}_{-\tau-2}$. Since $\frac{n-2}{2}<\tau<n-2$, it follows from \cite{CSCB} that $D_f^2$ is an isomorphism if it is injective. With injectivity proven above, the proof is complete; see \cite[Thm.\ 9.2d]{LP} for the proof for the unweighted Dirac operator.
\end{proof}

As explained in Section \ref{sec: weighted AE manifolds}, the asymptotically Euclidean structure defines a trivialization of the spin bundle at infinity, allowing for the notion of a spinor which is ``constant'' in the asymptotic coordinate system. In what follows, for $\rho>0$, let $S_{\rho}=r^{-1}(\rho)$ be the $\rho$-level set of $r$, that is, a coordinate sphere of radius $\rho$.

\begin{proof}
[Proof of Theorem \ref{thm: weighted witten}]
With respect to the trivialization of the spin bundle at infinity, the weighted Dirac operator may be written as 
\begin{equation}\label{eqn: asymptotic expansion of Dirac operator}
    D_f
    =e^i\cdot \partial_i-\frac{1}{2}(\nabla f)\cdot  -\frac{1}{8}(\partial_k g_{ij})e^i\cdot [e^j\cdot ,e^k\cdot] +O(r^{-2\tau-1}).
\end{equation}

Choose a spinor $\psi_0$ which is constant at infinity and with $|\psi_0|\to 1$ at infinity, and extend it to a smooth spinor on $M$. 
It follows from the above equation and the assumption $f\in C^{2,\alpha}_{-\tau}(M)$ that $D_f^2\psi_0\in C^{0,\alpha}_{-\tau-2}(M)$.
By Lemma (\ref{lem: D^2_f is an iso}), there exists $\xi\in C^{2,\alpha}_{-\tau}(M)$ with $D^2_f\xi=D^2_f\psi_0$. The spinor $\psi=\psi_0-\xi$ then satisfies $D_f^2\psi=0$ and $\phi:=D_f\psi=D_f\psi_0-D_f\xi$ satisfies $D_f\phi=0$ and lies in $C^{1,\alpha}_{-\tau-1}(M)$, so integrating by parts as in the proof of the Lemma above shows that $\phi=0$. Thus $\psi$ is a weighted harmonic spinor which is asymptotic to $\psi_0$.

Let $X$ be the vector field on $M\setminus K$ defined by
\begin{equation}
    X=\Rea \langle \nabla_i\psi,\psi\rangle e^{-f}e_i.
\end{equation}
Let $\lambda_i=\Rea \langle \nabla_i\psi,\psi\rangle e^{-f}$ so that $X=\lambda_ie_i$. Define the $(n-1)$-form 
\begin{equation}
    \alpha= \iota_X(dV_g).
\end{equation}
Then $d\alpha =\Div_g(X) dV_g$ and 
\begin{align}
    \Div_g(X)
    &=\lambda_i \Div_g(e_i)+\langle \nabla \lambda_i,e_i\rangle \\
    &=\nabla_i\lambda_i \nonumber\\
    &=\Rea \nabla_i(\langle \nabla_i\psi,\psi\rangle e^{-f}) \nonumber\\
    &= \left(\Rea \langle \nabla_i\nabla_i\psi,\psi\rangle -\Rea\langle \nabla_{\nabla f}\psi,\psi\rangle +|\nabla \psi|^2\right)e^{-f} \nonumber\\
    &=\left(\Rea \langle \dL_f\psi,\psi\rangle +|\nabla \psi|^2\right)e^{-f},\nonumber
\end{align}
hence 
\begin{equation}
    d\alpha = \left(\Rea \langle \dL_f\psi,\psi\rangle +|\nabla \psi|^2\right)e^{-f} dV_g.
\end{equation}
Stokes' theorem then gives, with $M_{\rho}=\{r\leq \rho\}\subset M$ and $S_{\rho}=\partial M_{\rho}$, that
\begin{align}\label{eqn: mass boundary term}
    \int_{M_{\rho}} \left(\Rea \langle \dL_f\psi,\psi\rangle + |\nabla  \psi|^2\right)e^{-f} dV_g 
    &=\int_{M_{\rho}}d\alpha 
    =\int_{S_{\rho}}\alpha 
    =\Rea \int_{S_{\rho}}\langle \nabla_i\psi,\psi\rangle e^{-f}\iota_{e_i}(dV_g).
\end{align}
Since $\psi=\psi_0-\xi$, the latter boundary term equals
\begin{equation}\label{eqn: expanded boundary term}
    \Rea \int_{S_{\rho}}\left(
    \langle \nabla_i\psi_0,\psi_0\rangle
    -\langle \nabla_i\xi,\psi_0\rangle
    -\langle \xi,\nabla_i\psi_0\rangle
    +\langle \nabla_i\xi,\xi\rangle \right)e^{-f}\iota_{e_i}(dV_g).
\end{equation}
Since $[e_j\cdot,e_k\cdot]$ is skew-Hermitian, as in (\ref{eqn: asymptotic expansion of Dirac operator}), it follows that
\begin{align}
    \Rea \langle \nabla_i \psi_0,\psi_0\rangle 
    &= -\frac{1}{8}\Rea (\partial_kg_{ij})\langle [e_j\cdot,e_k\cdot]\psi_0,\psi_0\rangle + O(r^{-2\tau -1}) 
    = O(r^{-2\tau -1}),
\end{align}
and so the first term in (\ref{eqn: expanded boundary term}) vanishes as $\rho\to \infty$. Also, since $\xi=O(r^{-\tau})$, $\nabla \xi=O(r^{-\tau-1})$, and $\nabla \psi_0=O(r^{-\tau-1})$, the third and fourth terms in (\ref{eqn: expanded boundary term}) also vanish as $\rho\to \infty$. Thus only the second term in (\ref{eqn: expanded boundary term}) contributes to the limit $\rho\to \infty$; the remainder of the proof consists in showing that said term equals the weighted mass.

To analyze the remaining term, let $L_i^f$ denote the operator
\begin{align}
    L_i^f
    &=\frac{1}{2}[e_i\cdot,e_j\cdot](\nabla_j -\frac{1}{2}(\nabla_jf)) \\
    &=(\delta_{ij}+e_i\cdot e_j\cdot)(\nabla_j -\frac{1}{2}(\nabla_j f) \nonumber\\
    &=\nabla_i-\frac{1}{2}(\nabla_if)+e_i\cdot D -e_i\cdot \frac{1}{2} (\nabla f)\cdot \nonumber\\
    &=\nabla_i^f+e_i\cdot D_f.\nonumber
\end{align}
If $\beta$ is the $(n-2)$-form
\begin{equation}
    \beta = e^{-f}\langle [e_i\cdot,e_j\cdot]\psi_0,\xi\rangle \iota_{e_i}\iota_{e_j} dV_g,
\end{equation}
then since $e^k\wedge \iota_{e_i}\iota_{e_j} dV_g = \delta_{ik}\iota_{e_j} dV_g-\delta_{jk}\iota_{e_i}dV_g$,
\begin{align}
    d\beta
    &=2e^{-f}((\nabla_j f)\langle [e_i\cdot,e_j\cdot]\psi_0,\xi\rangle \\
    &\qquad\qquad-\langle [e_i\cdot,e_j\cdot]\nabla_j\psi_0,\xi\rangle +\langle [e_i\cdot,e_j\cdot]\psi_0,\nabla_j\xi\rangle))\iota_{e_i}dV_g \nonumber\\
    &=-2e^{-f}(\langle [e_i\cdot,e_j\cdot](\nabla_j\psi_0-\frac{1}{2}(\nabla_jf)\psi_0),\xi\rangle  \nonumber\\
    &\qquad\qquad-\langle \psi_0,[e_i\cdot,e_j\cdot](\nabla_j\xi-\frac{1}{2}(\nabla_jf)\psi_0)\rangle)\iota_{e_i}dV_g \nonumber\\
    &=-4e^{-f}(\langle L_i^f\psi_0,\xi\rangle  -\langle \psi_0,L_i^f\xi\rangle)\iota_{e_i}dV_g. \nonumber
\end{align}
Therefore, by Stokes' theorem and the fact that $D_f\xi=D_f\psi_0$, the second term in (\ref{eqn: expanded boundary term}) is 
\begin{align}\label{eqn: expanded boundary term 2}
    -\Rea \int_{S_{\rho}}
    \langle \nabla_i\xi,\psi_0\rangle
    &e^{-f}\iota_{e_i}(dV_g) \\
    &=\Rea \int_{S_{\rho}}
    \langle e_i\cdot D_f\xi-L_i^f\xi-\frac{1}{2}(\nabla_i f)\xi,\psi_0\rangle
    e^{-f}\iota_{e_i}(dV_g) \nonumber\\
    &=\Rea \int_{S_{\rho}}
    \left(\langle e_i\cdot D_f\psi_0,\psi_0\rangle-\langle \xi,L_i^f\psi_0\rangle-\frac{1}{2}\langle (\nabla_i f)\xi,\psi_0\rangle\right)
    e^{-f}\iota_{e_i}(dV_g). \nonumber
\end{align}
Since $f\to 0$ at infinity, $\nabla f= O(r^{\delta-1})$, where $\delta-1<\tau-(n-1)$ by (\ref{eqn: positive weighted mass assumptions}), and $\xi=O(r^{-\tau})$, the last term above vanishes as $\rho\to \infty$. 
Similarly, the second term above vanishes in the limit. On the other hand, (\ref{eqn: asymptotic expansion of Dirac operator}) gives
\begin{align}
    e_i\cdot D_f \psi_0
    &=-\frac{1}{8}(\partial_kg_{lj})e_i\cdot e_l\cdot [e_j\cdot,e_k\cdot]\psi_0-\frac{1}{2}e_i\cdot (\nabla f)\cdot \psi_0+ O(r^{-2\tau-1})\psi_0 \\
    &=-\frac{1}{4}(\partial_kg_{lj})e_i\cdot e_l\cdot (\delta_{jk}+e_j\cdot e_k\cdot)\psi_0-\frac{1}{2}e_i\cdot (\nabla f)\cdot \psi_0 +O(r^{-2\tau-1})\psi_0 \nonumber\\
    &=-\frac{1}{4}(\partial_jg_{kj}-\partial_k g_{jj})e_i\cdot e_k\cdot\psi_0-\frac{1}{2}e_i\cdot (\nabla f)\cdot \psi_0 +O(r^{-2\tau-1})\psi_0. \nonumber
\end{align}
Writing $e_i\cdot e_k\cdot = \frac{1}{2}[e_i\cdot, e_k\cdot] -\delta_{ik}$ and noting that $[e_i\cdot,e_k\cdot]$ is skew, it follows that 
\begin{equation}
    \Rea \langle e_i\cdot D_f\psi_0,\psi_0\rangle 
    =\frac{1}{4}(\partial_jg_{ij}-\partial_ig_{jj}+2(\nabla_i f)+O(r^{-2\tau-1}))|\psi_0|^2.
\end{equation}
and hence (\ref{eqn: expanded boundary term 2}) becomes
\begin{equation}
    \frac{1}{4}\int_{S_{\rho}}\left(\partial_jg_{ij}-\partial_ig_{jj}+2(\nabla_i f)+O(r^{-2\tau-1})\right)|\psi_0|^2e^{-f}\iota_{e_i} dV_g.
\end{equation}
Putting this into (\ref{eqn: mass boundary term}), letting $\rho\to \infty$ and using the definition of mass (\ref{eqn: definition of mass}) gives the formula
\begin{equation}
    \int_M\left(|\nabla \psi|^2+\frac{1}{4}\Scal_f|\psi|^2\right)e^{-f} dV_g 
    = \frac{1}{4}\mass(g)+\frac{1}{2}\lim_{\rho\to \infty}\int_{S_{\rho}}|\psi_0|^2e^{-f}\iota_{\nabla f}dV_g.
\end{equation}
Finally, a coordinate calculation shows that 
\begin{equation}
    \int_{S_{\rho}}\langle \nabla f,\nu\rangle\,e^{-f} dA
    =\int_{S_{\rho}}e^{-f}\iota_{\nabla f}dV_g,
\end{equation}
and since $|\psi_0|\to 1$ at infinity, the second-to-last equation gives the weighted Witten formula
\begin{equation}
    \int_M\left(|\nabla \psi|^2+\frac{1}{4}\Scal_f|\psi|^2\right)e^{-f} dV_g 
    = \frac{1}{4}\mass(g)+\frac{1}{2}\lim_{\rho\to \infty}\int_{S_{\rho}}\langle \nabla f,\nu\rangle \,e^{-f}dA.
\end{equation}
\end{proof}
{\footnotesize

}

\newcommand{\Addresses}{{
  \bigskip
  \bigskip
  \small
    \textsc{Department of Mathematics}\par\nopagebreak
    \textsc{Massachusetts Institute of Technology}\par\nopagebreak
    \textsc{77 Massachusetts Avenue} \par\nopagebreak
    \textsc{Cambridge, MA 02139} 
    
    \medskip
    \medskip
    
    \textit{Correspondence to be sent to:} \texttt{juliusbl@mit.edu}

}}

\Addresses

\end{document}